\definecolor{red}{rgb}{1,0,0}
\definecolor{blue}{rgb}{0,0,1}
\definecolor{green}{rgb}{0,.6,0}
\definecolor{black}{rgb}{0,0,0}
\newtheorem{thm}{Theorem}[section]
\newtheorem{cor}[thm]{Corollary}
\newtheorem{lem}[thm]{Lemma}
\newtheorem{prop}[thm]{Proposition}
\newtheorem{obs}[thm]{Observation}
\newcommand{\thr}{\operatorname{th}}
\newcommand{\throtplus}{\operatorname{th_+}}
\newcommand{\thc}{\operatorname{th_\mathit{c}}}
\newcommand{\thp}{\throtplus} 
\newcommand{\thf}{\operatorname{th_\mathit{f}}}
\newcommand{\thx}{\operatorname{th_\mathit{X}}}
\newcommand{\pt}{\operatorname{pt}}
\newcommand{\ptp}{\operatorname{pt_+}}
\newcommand{\ptf}{\operatorname{pt_\mathit{f}}}
\newcommand{\ptx}{\operatorname{pt_\mathit{X}}}
\theoremstyle{definition}
\newtheorem{rem}[thm]{Remark}
\theoremstyle{definition}
\newtheorem{defn}[thm]{Definition}
\theoremstyle{definition}
\newtheorem{ex}[thm]{Example}
\newcommand{\R}{\mathbb{R}}
\newcommand{\ZZ}{\mathbb{Z}}
\newcommand{\N}{\mathbb{N}}
\newcommand{\al}{\alpha}
\newcommand{\bt}{\beta}
\newcommand{\ceq}{\coloneqq}
\newcommand{\ecc}{\operatorname{ecc}}
\newcommand{\dist}{\operatorname{dist}}
\newcommand{\bit}{\begin{itemize}}
\newcommand{\eit}{\end{itemize}}
\newcommand{\ben}{\begin{enumerate}}
\newcommand{\een}{\end{enumerate}}
\newcommand{\beq}{\begin{equation}}
\newcommand{\eeq}{\end{equation}}
\newcommand{\bea}{\begin{eqnarray*}} 
\newcommand{\eea}{\end{eqnarray*}}
\newcommand{\bpf}{\begin{proof}}
\newcommand{\epf}{\end{proof}\ms}
\newcommand{\bmt}{\begin{bmatrix}}
\newcommand{\emt}{\end{bmatrix}}
\newcommand{\ms}{\medskip}
\newcommand{\lc}{\left\lceil}
\newcommand{\rc}{\right\rceil}
\newcommand{\lf}{\left\lfloor}
\newcommand{\rf}{\right\rfloor}
\newcommand{\noi}{\noindent}
\newcommand{\ceil}[1]{\lc #1 \rc}
\newcommand*\patchAmsMathEnvironmentForLineno[1]{%
  \expandafter\let\csname old#1\expandafter\endcsname\csname #1\endcsname
  \expandafter\let\csname oldend#1\expandafter\endcsname\csname end#1\endcsname
  \renewenvironment{#1}%
     {\linenomath\csname old#1\endcsname}%
     {\csname oldend#1\endcsname\endlinenomath}}%
\newcommand*\patchBothAmsMathEnvironmentsForLineno[1]{%
  \patchAmsMathEnvironmentForLineno{#1}%
  \patchAmsMathEnvironmentForLineno{#1*}}%
\newcommand{\s}{\hat s}
\renewcommand{\a}{\alpha}
\renewcommand{\b}{\beta}
\newcommand{\sseq}{\subseteq}
\definecolor{dark}{rgb}{0,0,0}
\definecolor{light}{rgb}{1,1,1}
\newcommand{\globalcolor}[1]{%
  \color{#1}\global\let\default@color\current@color
}
\title{Throttling processes equivalent to full throttling on trees}
\author{ Michael S. Ross\thanks{Department of Mathematics, Iowa State University,
Ames, IA 50011, USA (msross@iastate.edu)}}
\begin{document}
\maketitle

\vspace{-25pt}\begin{abstract} 
Consider a discrete-time process on a graph $G$ where a set $B$ of initial vertices are chosen to be colored blue (the remainder being white) and then a time step consists of every currently blue vertex forcing all of its neighbors to become blue; this process stops when every vertex of the graph is blue, and the process is called full forcing. The full throttling number of $G$ is then defined to be the minimum sum of the cardinality of $B$ and the number of time steps needed to complete the forcing process. On trees, the full throttling number is equivalent to the throttling numbers of several other graph processes, such as positive-semidefinite zero forcing, the game of cops and robbers, and the distance domination number (alternately, the $k$-radius) of a graph. For all of these, it is known that maximum possible throttling number for a tree on $n$ vertices is somewhere between $1.4502\sqrt{n}$ and $\frac{\sqrt{14}}{2}\sqrt{n}$, with the former exhibited by a family of spiders. After introducing some new ideas and methods for working with throttling on trees, this paper determines the exact full throttling number of all balanced spiders (trees with equal-length paths extending from a center vertex), and proves that their full throttling numbers are bounded above by that of paths of the same order $n$, which are known to have full throttling number $\lc\sqrt{2n}-\frac{1}{2}\rc$.

\end{abstract}

\noi {\bf Keywords} Throttling, full forcing, zero forcing, cops and robbers, $d$-domination

\noi{\bf AMS subject classification} 05C57, 05C15, 05C50

\section{Introduction}\label{sintro} 

The study of throttling graph processes and quantities has its origins as a question about zero forcing \cite{BY}.  Zero forcing is a process used to bound minimum rank/maximum nullity problems from linear algebra and spectral graph theory \cite{MR}, and arose independently in the control of quantum systems \cite{graphinfect, Sev}. The zero forcing number has also been shown \cite{yang2012fast} to be equivalent to fast mixed graph searching. Further, zero forcing appears as a subprocess in the study of power domination \cite{BH Pdom, Liao, ZKC Pdom}, which itself is used as a model for the placement of phase measurement units of electrical networks \cite{Haynes2, Haynes1}. Purely in terms of graph theory, the zero forcing process involves selecting some initial vertices to color blue, and then that blueness can spread through the graph under specific conditions, in discrete time steps. Thus the most natural problems that arise are to determine the smallest number of initial vertices that will eventually color the entire graph blue (this is the \emph{zero forcing number} of the graph), and to determine how much time the process will take with the smallest number of initial vertices necessary (this is the \emph{zero forcing propagation time} of the graph \cite{PT}). Zero forcing throttling seeks to strike a balance between these by first noting that a larger initial set of vertices causes the process to terminate sooner, and then finding an optimal solution that minimizes the sum of the number of initial vertices and the propagation time of the zero forcing process. 

This concept of balancing initial resources against a related parameter has since been applied to other graph processes, and was first extended to positive semidefinite zero forcing \cite{PSDT} which is a process used bound positive semidefinite minimum rank/maximum nullity problems. Subsequently, throttling was applied to the game of cops and robbers  \cite{CRT}, which has applications to the coordination of mobile autonomous agents \cite{ilcinkas}, routing reconfiguration in networks \cite{cs}, and graph decompositions \cite{bs}.

 We now present a simplified forcing process called \emph{full forcing}, and the related throttling process, which will be called \emph{full throttling}.  Let $G$ be a simple graph,  $B\sseq V(G)$ be the current set of blue vertices, and $W$ the current set of white vertices. Then, in a given time step, the \emph{full forcing color change rule} colors $w\in W$ blue whenever $w$ is the neighbor of some $v\in B$, in which case we say that $v$ {\em forces} $w$ and write $v\rightarrow w$. If multiple vertices are capable of forcing $w$, a choice of forcing vertex is made. Ultimately, this does not affect the contents of the sets $B^{(k)}$ defined below. The full forcing process begins with an initial set of blue vertices, $B^{(0)}=B\subseteq V(G)$, with all other vertices being white. The full forcing color change rule is applied iteratively, and the set $B^{(k)}$ is defined to be the set of all the vertices that $\bigcup_{i=0}^{k-1} B^{(i)}$ can force independently; the collection of forces that color the vertices in $B^{(k)}$ blue are said to occur during the \emph{$k^{\text{th}}$ time-step}. If this process eventually colors all of $V(G)$ blue, the set $B^{(0)}$ is said to be a \emph{full forcing set  of $G$}, and the least $k$ such that $\bigcup_{i=0}^{k} B^{(i)}=V(G)$ is the {\em  full forcing propagation time of $B$}, denoted by $\ptf(G;B)$. If $B\subset V(G)$ is not a full forcing forcing set, then $\ptf(G;S)=\infty$.  The \emph{full throttling number of $B$} is $\thf(G;B) = |B| + \ptf(G;B)$, and the \emph{full throttling number $G$} is \[ \thf(G)=\min_{B\sseq V(G)} \thf(G;B). \]
 In the event that $\thf(G;B)=\thf(G)$, $B$ is said to be \emph{optimal}, or more specifically an \emph{optimal full throttling set} of $G$. Note that in a connected graph, every nonempty set of vertices is a full forcing set; and thus the interesting questions about full forcing relate to the propagation time and full throttling numbers.
 
  The \emph{distance between vertices} $u,v$, noted $\dist(u,v)$ is the length of the shortest path between $u$ and $v$. Given a set $S\sseq V(G)$ and a vertex $v$, the \emph{distance from $v$ to $S$} is $\dist(v,S)=\min_{u\in S} \dist(u,v)$. The \emph{eccentricity of a set of vertices} $S\sseq V(G)$ is $\ecc(S)=\max_{v\in V(G)}\dist(v,S)$.
  
  The \emph{distance domination number of $G$}, given a distance $d$, is the size of the smallest set $B$ of vertices with $\ecc(B)=d$; this is denoted by $\gamma_d(G)$.  With a shift in perspective, the $k$-radius of a graph is $\operatorname{rad_\mathit{k}}(G)= \min_{S\sseq V,|S|=k} \ecc(S)$. In their respective notations, the throttling numbers of distance domination and $k$-radius are given by $ \min_{d\geq 0}  \gamma_d(G) + d$ and $\min_{k\geq 1}k + \operatorname{rad_\mathit{k}}(G)$.  The throttling of full forcing, distance domination, and $k$-radius all minimize the sum of a number of vertices and the eccentricity of that set of vertices, and so throttling for each of these parameters is equivalent.
 
The throttling numbers and analogous definitions have been stated for many other processes, a handful of which are given below. Note that for variants of zero forcing, the sets $B^{(i)}$ are defined as for full forcing above; using the associated forcing rules to determine which vertices are forced.
\begin{itemize}
\item For (standard) zero forcing, with $v\in B$ (where $B$ is the current set of blue vertices), and $w\in W=V(G)\setminus B$, $v\to w$ whenever $w$ is the \emph{only} white neighbor of $v$. Then, the propagation time and throttling number of $B$ on $G$ are $\pt(G;B)$ and $\thr(G;B)= |B|+\pt(G;B)$ respectively, and the \emph{throttling number of $G$} \cite{BY} is $\thr(G)=\min_{B\sseq V(G)} \thr(G;B)$.  

\item For positive semidefinite zero forcing, we consider the components $W_1,\dots,W_k$ of the induced subgraph $G[V\setminus B]$; then $v\in B$ forces $w\in W_i$ whenever $w$ is the only white neighbor of $v$ in $G[B\cup W_i]$. The PSD propagation time and PSD-throttling number of $B$ are $\ptp(G;B)$ and $\thp(G;B) = |B| + \ptp(G;B)$. The \emph{PSD-throttling number of $G$} \cite{PSDT} is then $\thp(G)=\min_{B\sseq V(G)} \thp(G;B)$.  

\item For the game of cops and robbers, you are given $k$ cops to place on the graph, and then a robber is placed somewhere in the graph. In a round, you move any number of cops to adjacent vertices --winning the game if a cop occupies the same vertex as the robber-- and then the robber can move to an adjacent vertex, trying to evade capture for long as possible. Assuming the robber is always placed optimally and both players move optimally, the $k$-capture time of $G$ is $\operatorname{capt_\mathit{k}}(G)$, the minimum capture time across all choices of $k$ cops. Then, the \emph{cop throttling number of $G$} \cite{CRT} is $\thc(G)=k+\operatorname{capt_\mathit{k}}(G)$.
\end{itemize}

A throttling process is said to \emph{have full throttling} whenever it is known to be equivalent to full throttling. As noted above, distance domination and $k$-radius throttling always have full throttling. Positive semidefinite zero forcing has full throttling on trees, as every white neighbor of a blue vertex must exist in its own unique component, so every blue vertex forces all of its neighbors each round, and thus is identical to full forcing. In \cite{CRT} it was shown that the cop throttling is equivalent to  full throttling on chordal graphs. 

Thus, on trees, \[ \thf(G)  = \thc(G) = \thp(G). \]
Note that these equalities are not true in general, as it is shown in \cite{CRT,PSDT} that across all graphs, \[\thf(G) \leq \thc(G) \leq \thp(G) \leq \thr(G),  \] with specific examples of graphs where the adjacent throttlings differ.

Except where specifically noted, the rest of this paper will discuss full throttling and those processes which have full throttling on trees, so the notation $\thf(G)$ will be used. It was shown in \cite{PSDT} that full throttling is subtree monotonic on trees, and that for paths of order $n$, $\thf(P_n) =\left \lceil \sqrt{2n}-\frac{1}{2} \right \rceil$. It was also shown in \cite{CRT} that if $T$ is a tree with the highest full throttling number among all trees of order $n$, then $\thf(P_n) \leq \thf(T) \leq 2\sqrt{n}$, with only a couple specific trees known to have $\thf(T)=\thf(P_n)+1$, and no known examples of trees with a higher full throttling number. It has since been shown in \cite{Jesse} that there is a family of trees $T$ of order $n$ with $ 1.45\sqrt{n}\leq\thf(T)$. 
These trees are all examples of \emph{spiders}, which are trees that have exactly one vertex with degree higher than 2.  Spiders are usually described in terms of lengths of their legs; e.g. $S(7,6,2)$ is a tree on 16 vertices, with one \emph{center} vertex adjacent to three disjoint paths of orders 7, 6, and 2.  A \emph{balanced spider} is one in which every leg has the same length, and is generally noted by $T_{\a,\b}=S(\b,\b,\dots,\b)$, where the spider has $\a$ legs, each of length $\b$. Consequently, this paper defines a \emph{super-spider} as any spider which has a full throttling number higher than that of the same-order path. The upper bound was also improved in \cite{Jesse}, where it was noted that $\thf(T)\leq \sqrt{14n}/2$ for any tree $T$ of order $n$, and that for all spiders $S$ of order $n$, $\thf(S)\leq \sqrt{3n}$.

In Section \ref{stools}, we show that full throttling is monotonic for connected minors on trees, define a framework to extend any throttling process to apply to weighted graphs, and present a method for computing the full throttling number of highly symmetric graphs, called \emph{concentration}.  
In Section \ref{spiders} we show that 
\(\thf(T_{\a,\b})=1 + \al \lf \sqrt{\frac{2\bt + \al + 1}{4 \al}} \rf + \lc \frac{\bt - \s}{2\s+1} \rc, \) and prove that there are no balanced super-spiders.

\section{New Tools for Full Throttling on Trees}\label{stools}

In this section we present a useful fact about the full throttling numbers of paths, strengthen the monotonicity results of \cite{PSDT}, and introduce a generalization of throttling processes on (vertex) weighted graphs; which is then used in a new technique for computing the full throttling number of highly symmetric trees, by first reducing them to smaller weighted trees.

\begin{lem}\label{triLem}
Let $t\in\ZZ^+$. Then, the longest path with throttling number $t$ is $P_{n_t}$, where $n_t$ is the $t$-th triangle number. Consequently, $\thf(P_n)=\lc\sqrt{2n+\frac{1}{4}}-\frac{1}{2}\rc$.
\end{lem}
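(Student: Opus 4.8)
The plan is to derive both assertions directly from Theorem~\ref{thmThrP}, so that only elementary estimates with the ceiling function remain. Write $n_t \ceq \tfrac{t(t+1)}{2}$, an integer, and recall the basic fact that $\lc x \rc \le m \iff x \le m$ whenever $m \in \ZZ$. First I would combine this fact with Theorem~\ref{thmThrP} to get, for $t \ge 1$, the chain
$\thp(P_n) \le t \iff \sqrt{2n} - \tfrac12 \le t \iff \sqrt{2n} \le t + \tfrac12 \iff 2n \le \big(t + \tfrac12\big)^2 = t^2 + t + \tfrac14$, the squaring step being valid because both sides are nonnegative (as $t\ge 1$). The last inequality reads $n \le \tfrac{t^2+t}{2} + \tfrac18$, and since $n$ and $\tfrac{t^2+t}{2} = n_t$ are integers this is equivalent to $n \le n_t$. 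Thus $\thp(P_n) \le t \iff n \le n_t$. Applying this at $t$ and at $t-1$, and using $n_t - n_{t-1} = t \ge 1$, shows that $\{\, n : \thp(P_n) = t \,\} = \{\, n_{t-1}+1, \dots, n_t \,\}$, a nonempty block of consecutive integers (with $n_0 = 0$ handling $t=1$); in particular $\thp(P_n)$ is nondecreasing in $n$, $\thp(P_{n_t}) = t$, and the largest path with throttling number $t$ is $P_{n_t}$.

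For the ``Consequently'' I would run the identical computation on the alternative expression: for $m \in \ZZ$,
$\lc \sqrt{2n + \tfrac14} - \tfrac12 \rc \le m \iff \sqrt{2n + \tfrac14} \le m + \tfrac12 \iff 2n + \tfrac14 \le m^2 + m + \tfrac14 \iff n \le n_m$. Hence $\lc \sqrt{2n + \tfrac14} - \tfrac12 \rc$ obeys exactly the characterization just found for $\thp(P_n)$ — both equal the unique $t$ with $n_{t-1} < n \le n_t$ — so the two agree, and together with Theorem~\ref{thmThrP} this is the stated triple equality. (Equivalently, one observes that $t \mapsto \sqrt{2t + \tfrac14} - \tfrac12$ is the increasing inverse of $t \mapsto n_t$, since $n = \tfrac{t(t+1)}{2}$ rearranges to $t = \tfrac{-1+\sqrt{8n+1}}{2}$.)

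There is no substantive obstacle in this argument; the only points that need care are the appeal to integrality of $n$ and $n_t$ when passing from $n \le \tfrac{t^2+t}{2} + \tfrac18$ to $n \le n_t$, and the nonnegativity that licenses squaring the inequalities. If instead a proof independent of Theorem~\ref{thmThrP} were wanted, the same conclusion follows from a direct interval argument on paths: $b$ blue vertices leave at most two ``end'' white intervals, each cleared in as many steps as its length, together with at most $b-1$ interior white intervals, each cleared from both sides in $\lc \ell/2 \rc$ steps, so $n \le b(2p+1)$ when the propagation time is $p$; maximizing $b(2p+1)$ over positive integers with $b+p = t$ returns exactly $n_t$, and a snaking placement of the blue vertices shows this bound is attained.
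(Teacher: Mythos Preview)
Your argument is correct and follows essentially the same route as the paper: both start from Theorem~\ref{thmThrP}, unwind the ceiling via $\lc x\rc\le t\iff x\le t$, square, and use integrality to pass from $n\le \tfrac{t(t+1)}2+\tfrac18$ to $n\le n_t$. You spell out the second equality more carefully than the paper (which just invokes the inverse triangle number), and your final paragraph sketching an independent interval argument is additional material not needed for the lemma.
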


\begin{proof}
Let $n_t$ be the largest integer for which $\thf(P_{n_t})= \lc\sqrt{2n_t}-\frac{1}{2}\rc = t$. Then, $n_t$ is the largest integer such that $\sqrt{2n_t} \leq t +\frac{1}{2}$. By squaring both sides and solving the resulting quadratic, one can see that $n_t = \frac{t(t+1)}{2}$. Thus, $n_t$ is the $t$-th triangle number, and consequently the throttling number of $P_n$ is the ceiling of the inverse triangle number of $n$ .
\end{proof}

On its own, Lemma \ref{triLem} may appear to be no more than a curiosity. However, this variant of $\thf(P_n)$ can lead to some elegant simplification when used alongside other throttling formulae, as in the proof of Lemma \ref{slopes}. Further, the triangle numbers will appear once again when we use them to construct a family of super-spiders in Proposition \ref{triSpid}.

It was established in \cite{PSDT} that full throttling (there called PSD-throttling) is subtree monotonic. We extend full throttling monotonicity to all connected minors.

\begin{obs}\label{cnMinor}
Any connected minor of a tree $T$ can be created using only edge contractions.
\end{obs}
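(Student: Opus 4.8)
The plan is to use the branch-set description of minors and to show that, when the minor is connected, the branch sets can be grown until they cover all of $V(T)$; contracting them then produces the minor using edge contractions only. Recall that $H$ is a minor of $T$ precisely when there is a \emph{model} of $H$ in $T$: a family $\{X_h : h\in V(H)\}$ of pairwise disjoint, nonempty, connected vertex subsets of $T$ (the \emph{branch sets}) with an edge of $T$ between $X_h$ and $X_{h'}$ whenever $hh'\in E(H)$. Since each $T[X_h]$ is a subtree, contracting all edges lying inside the branch sets turns $T$, using only edge contractions, into the graph $T'$ in which each $X_h$ has become a single vertex. Hence it suffices to (i) produce a model whose branch sets partition $V(T)$ and (ii) verify that for such a model $T' = H$.

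For (i), I would start from an arbitrary model and absorb uncovered vertices one at a time. As long as some vertex of $T$ lies in no branch set, connectedness of $T$ yields an uncovered vertex $u$ that is $T$-adjacent to a vertex lying in some $X_h$; replace $X_h$ by $X_h \cup \{u\}$. The branch sets stay pairwise disjoint (since $u$ was uncovered), each stays connected (a subtree gains a pendant vertex), and every edge of $H$ is still witnessed (no branch set shrinks or is merged). Since the uncovered set strictly shrinks, after finitely many steps the branch sets partition $V(T)$.

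Step (ii) is the part I expect to require the most care: a priori, contracting the branch sets might create edges of $T'$ not present in $H$, and ruling this out is where acyclicity of $T$ enters. Contracting a subtree on $k$ vertices deletes exactly $k-1$ edges, so, because the $X_h$ partition $V(T)$,
\[
|E(T')| \;=\; \bigl(|V(T)|-1\bigr) - \sum_{h\in V(H)}\bigl(|X_h|-1\bigr) \;=\; \bigl(|V(T)|-1\bigr) - \bigl(|V(T)|-|V(H)|\bigr) \;=\; |V(H)|-1 ,
\]
counted with multiplicity. As $T'$ is connected with vertex set $V(H)$ and has $|V(H)|-1$ edges, it is a tree; in particular it is simple. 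Since $H$ is a connected spanning subgraph of $T'$, we get $|E(H)| \ge |V(H)|-1 = |E(T')|$, forcing $H = T'$. Thus $H$ is obtained from $T$ by edge contractions alone.

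An alternative route is induction on the number of deletions in a minor sequence for $H$, using that every edge of a tree is a bridge, every non-leaf vertex is a cut vertex, and a connected minor of a disconnected graph is a minor of one of its components; the branch-set argument above is preferable because it avoids bookkeeping on the order in which the minor operations are performed.
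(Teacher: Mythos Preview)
The paper records this statement as an Observation and supplies no proof whatsoever; it is treated as a folklore fact and used immediately in the proof of Theorem~\ref{cMinor}. Your branch-set argument is correct and fills in precisely the details the paper omits: the absorption procedure in (i) is sound, and the edge count in (ii) is the right way to rule out spurious edges, since a connected multigraph on $|V(H)|$ vertices with exactly $|V(H)|-1$ edges must be a simple tree, forcing $H=T'$. The alternative inductive route you sketch at the end is closer in spirit to how one would justify the observation off the cuff, but your main argument is cleaner.
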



\begin{thm}\label{cMinor}
Let $T$ be a tree, and $T'$ be a connected minor of $T$. Then,
\[ \thf(T') \leq \thf(T). \]
That is, full throttling is connected minor monotonic for trees.
\end{thm}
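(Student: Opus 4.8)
The plan is to reduce the statement to a single edge contraction and then push the PSD-forcing process forward through the resulting quotient map. By Observation~\ref{cnMinor}, $T'$ is obtained from $T$ by a finite sequence of edge contractions, and contracting an edge of a tree again yields a tree; so by induction on the number of contractions it suffices to prove $\thp(T/e)\le\thp(T)$ for a single edge $e=ab$ of $T$, where $T/e$ denotes $T$ with $e$ contracted to a new vertex $c$. Throughout, let $\pi\colon V(T)\to V(T/e)$ be the quotient map, so $\pi(a)=\pi(b)=c$ and $\pi$ is the identity on every other vertex.

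Fix an optimal PSD-throttling set $B$ of $T$ with $\ptp(T;B)=t$, so $\thp(T)=|B|+t$, and put $B'\ceq\pi(B)$. Since $\pi$ identifies only the pair $\{a,b\}$ we get $|B'|\le|B|$ (with equality unless $a,b\in B$), so it is enough to show $\ptp(T/e;B')\le t$, which gives $\thp(T/e)\le|B'|+t\le|B|+t=\thp(T)$. Write $\mathcal{B}_k$ for the blue set after $k$ time-steps of CCR-$\Zp$ in $T$ starting from $B$, and $\mathcal{B}'_k$ for the blue set after $k$ time-steps in $T/e$ starting from $B'$. The crux will be the claim
\[
\pi(\mathcal{B}_k)\subseteq\mathcal{B}'_k\qquad\text{for all }k\ge 0 .
\]
Granting it and taking $k=t$ yields $V(T/e)=\pi(V(T))=\pi(\mathcal{B}_t)\subseteq\mathcal{B}'_t$, i.e. $\ptp(T/e;B')\le t$.

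I would prove the claim by induction on $k$, the case $k=0$ being the definition of $B'$. For the inductive step, assume $\pi(\mathcal{B}_{k-1})\subseteq\mathcal{B}'_{k-1}$ and let $x$ be forced during step $k$ in $T$ via a force $p\to x$ with $p\in\mathcal{B}_{k-1}$. If $\{p,x\}=\{a,b\}$ then $\pi(x)=c=\pi(p)\in\pi(\mathcal{B}_{k-1})\subseteq\mathcal{B}'_{k-1}$ and there is nothing to do. Otherwise $\pi(p)\pi(x)$ is an edge of $T/e$, and it remains to show that either $\pi(x)$ is already blue in $T/e$ after step $k-1$, or $\pi(p)\to\pi(x)$ is a legal force in $T/e$ during step $k$; both give $\pi(x)\in\mathcal{B}'_k$, the latter because $\pi(p)\in\mathcal{B}'_{k-1}$ by the inductive hypothesis.

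Establishing that this projected force is legal is the main obstacle, because CCR-$\Zp$ is sensitive to how the white vertices split into connected components, and contraction can a priori merge components. Three facts make it go through, and the bulk of the write-up will consist of pinning them down and running the ensuing short case analysis. First, since $T$ is a tree no vertex is adjacent to both $a$ and $b$, so $\pi$ restricts to an injection on the neighbourhood of any vertex other than $a,b$, and each blue vertex is adjacent to at most one vertex of any white component. Second, contracting $e$ never merges two distinct white components of the current graph: if both $a$ and $b$ are white they already lie in a common white component, while if one of them is blue then $c$ is blue in $T/e$ (by the inductive hypothesis) and creates no new white adjacencies. Third, deleting $e$ splits $T$ into the part containing $a$ and the part containing $b$; together with the second fact this forces the white component of $\pi(x)$ in $T/e-\mathcal{B}'_{k-1}$ to be contained in the $\pi$-image of the white component of $x$ in $T-\mathcal{B}_{k-1}$ (the possibly larger blue set $\mathcal{B}'_{k-1}$ only breaks components up further, which is harmless). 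With these in hand, a case check on which of $p,x$ lie in $\{a,b\}$ shows that the unique-white-neighbour condition for $p\to x$ in $T$ transfers to $\pi(p)\to\pi(x)$ in $T/e$: the only neighbours $\pi(p)$ can gain come from the opposite endpoint of $e$, which lies in the other part of $T-e$ and hence in a different white component, so it contributes no competing white neighbour. Completing this case analysis finishes the induction, and with it the theorem.
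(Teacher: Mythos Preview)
Your proposal is correct and follows essentially the same approach as the paper: reduce to a single edge contraction via Observation~\ref{cnMinor}, push an optimal set $B$ forward through the quotient map, and argue that neither $|B|$ nor the propagation time can increase. The paper dispatches the propagation-time comparison in two sentences (``if the process forces through $uv$, subsequent forces happen one step sooner; otherwise the time is unchanged''), whereas you set up an explicit induction $\pi(\mathcal{B}_k)\subseteq\mathcal{B}'_k$ and a component-tracking case analysis; this is a more rigorous write-up of the same idea, though for trees it is heavier than needed, since in a tree every blue vertex can always PSD-force each of its white neighbours (distinct white neighbours lie in distinct white components), so $\mathcal{B}_k$ is simply the distance-$k$ ball around $B$ and contraction manifestly does not increase distances.
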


\begin{proof}
 We need consider only edge contraction by Observation \ref{cnMinor}. Let $uv\in E(T)$, $B \subseteq V(T)$, and $B'$ be the image of $B$ under the edge contraction $T/uv$. That is, $B'$ contains $B\setminus\{u,v\}$, and contains the new vertex  if and only if at least one of $u$ or $v$ are in $B$. Thus, $|B'|\leq |B|$. Next, consider the propagation of $B$ through $T$. If that process forces through edge $uv$, then all subsequent forces in that component will occur one time-step sooner in the propagation of $B'$ through $T/uv$. If the process does not force through edge $uv$, $\ptf(T/uv ; B')\leq\ptf(T;B)$.
Thus, for all $uv\in E(T),$ \(\thf(T/uv)\leq\thf(T). \)
\end{proof}

We now provide a natural extension of the throttling of \emph{any} forcing process to allow for (vertex) weighted graphs. In fact, this provides a blueprint for any process that involves selecting an initial set of vertices from the weighted graph. We'll refer to the generic processes as $X$-forcing and $X$-throttling, and will use $\ptx$ and $\thx$ appropriately.

\begin{defn}
Let $(G,w)$ be a weighted graph, where $w:V(G)\to\R^+$ is a weight function on the vertices of $G$, and let $B\subseteq V(G)$ be an $X$-forcing set of $G$. Define $w(B)=\sum_{v\in B}w(v)$. Then, 
\[\thx(G,w;B) = w(B) + \ptx(G;B) \]
and the \emph{$X$-throttling number} of $(G,w)$ is  
\[\thx(G,w) = \min_{B\subseteq V(G)}\thx(G,w;B).  \]
In the event that $\thx(G,w;B)=\thx(G,w)$, $B$ is said to be an \emph{optimal} ($X$-throttling) set for $(G,w)$.\end{defn}

Note that this method of throttling on weighted graphs is also a generalization of \emph{weighted $X$-throttling} on unweighted graphs, defined for zero forcing and positive semidefinite zero forcing in \cite{BY,PSDT} as
  \[\thf^\omega(G) = \min_{B\subseteq V(G)} \left(\omega|B| + \ptf(G;B)\right),\]  wherein the ``weighting" takes the form of a scalar $\omega$ multiplied by the size of $B$, rather than weights on individual vertices. 

\begin{obs}\label{specCase}
When the weight function $w$ is constant, i.e., $w(v)=\omega$ for all $v\in V(G)$, $X$-throttling of the weighted graph $(G,w)$ is equal to $\omega$-weighted $X$-throttling of the unweighted graph $G$:
$\thx(G,w) =\thx^\omega(G)$.
When the weight function is identically one, the result is ordinary $X$-throttling.
\end{obs}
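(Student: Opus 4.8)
The plan is to unwind the definitions directly; no combinatorial machinery is needed, since both sides are minima over the same index set (all subsets $B \subseteq V(G)$) of objective functions that will turn out to be identical term-by-term.

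First I would fix a constant weight function, $w(v) = \omega$ for all $v \in V(G)$, and fix an arbitrary PSD-forcing set $B \subseteq V(G)$. Then I would compute
\[
w(B) = \sum_{v \in B} w(v) = \sum_{v \in B} \omega = \omega\,|B|,
\]
so that, substituting into the definition of $\throtplus(G,w;B)$,
\[
\throtplus(G,w;B) = w(B) + \ptp(G;B) = \omega\,|B| + \ptp(G;B).
\]
The right-hand side is exactly the quantity being minimized in the definition of $\throtplus^\omega(G)$. Note also that if $B$ is \emph{not} a PSD-forcing set then $\ptp(G;B) = \infty$ on both sides, so such $B$ never attain either minimum and may be ignored.

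Since the preceding identity holds for every $B \subseteq V(G)$, taking the minimum over all $B$ on both sides yields
\[
\throtplus(G,w) = \min_{B \subseteq V(G)} \throtplus(G,w;B) = \min_{B \subseteq V(G)} \bigl(\omega\,|B| + \ptp(G;B)\bigr) = \throtplus^\omega(G),
\]
which is the first claim. Specializing to $\omega = 1$ gives $\throtplus(G,w) = \min_{B}\bigl(|B| + \ptp(G;B)\bigr) = \throtplus(G)$, the ordinary (unweighted) PSD-throttling number, which is the second claim.

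I do not anticipate any genuine obstacle here: the only point requiring even minor care is confirming that the propagation time $\ptp(G;B)$ appearing in the weighted definition is computed by the same PSD-forcing process as in the unweighted and $\omega$-weighted definitions — which it is, by construction, since vertex weights affect only the $w(B)$ term and not the color change rule. Thus the two objective functions agree pointwise and the equality of minima is immediate.
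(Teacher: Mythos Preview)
Your proof is correct; it is a straightforward unwinding of the definitions, and your remark that the propagation time is unaffected by vertex weights is exactly the point that makes the identity immediate. The paper itself states this as an observation without proof, so there is nothing further to compare.
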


Next, we define a method by which we can use full throttling on weighted trees to simplify throttling on unweighted graphs, whenever the process being throttled is equivalent on trees to full throttling.

\begin{defn}
Let $(T,w)$ be a weighted tree and let $v\in V(T)$. Then, the components of $T-v$ are called \emph{branches of $T$ at $v$}. Branches $T_1$, $T_2$ of $T$ at $v$ are called \emph{weight isomorphic} when 

\begin{enumerate}
\item there is an automorphism $\sigma$ of  $T$ such that for all $x\in V(T)\setminus(V(T_1)\cup V(T_2))$, $\sigma(x)=x$, $\sigma(V(T_1))=V(T_2)$, and $\sigma(V(T_2))= V(T_1)$, and
\item for all $x \in V(T_1)$, $w(x)=w(\sigma(x))$.
\end{enumerate}
In this case $\sigma$ is called a {\em weight isomorphism}. Given a set  
$T_v=\{T_1,\dots,T_\ell\}$ of pairwise weight isomorphic branches of $T$ at  $v\in V(T)$ with weight isomorphisms $\sigma_i$ between $V(T_i)$ and $V(T_1)$ for $i=2,\dots,\ell$, a set of vertices $B$ is \emph{weight isomorphic with respect to $T_v$} whenever $B\cap V(T_i)=\sigma_i(B\cap V(T_1))$  for $i=2,\dots,\ell$. \end{defn}

\begin{thm}\label{sym}
Let $(T,w)$ be an integer weighted tree. Suppose that 
$T_v=\{T_1,\dots,T_\ell\}$ is a set of pairwise weight isomorphic branches of $T$ at  $v\in V(T)$ with weight isomorphisms $\sigma_i$ between $V(T_i)$ and $V(T_1)$ for $i=2,\dots,\ell$, and that $w(v)=1$. 
Then there is an optimal full forcing set $B$ for $(T,w)$ that is weight isomorphic with respect to $T_v$. 
\end{thm}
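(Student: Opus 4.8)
The plan is to start from an arbitrary optimal set $B$ for $T$ and "symmetrize" it by applying one of the symmetry maps across all the branches, arguing that this neither increases the weight nor increases the propagation time. The main work is controlling propagation time under symmetrization, because weight behaves well almost for free: since all branches $T_i$ are pairwise isometric via the $\sigma_i$ and $w$ is preserved by each $\sigma_i$, replacing $B\cap V(T_i)$ by $\sigma_i(B\cap V(T_1))$ for each $i$ gives a set $B'$ with $w(B'\cap V(T_i)) = w(B\cap V(T_1))$. To keep the total weight from going up, I would first note that among the branches $T_1,\dots,T_m$ I may relabel so that $T_1$ is a branch of \emph{minimum} weight contribution $w(B\cap V(T_j))$; then $w(B') \le w(B)$ since the part of $B$ outside $\bigcup V(T_i)$ is untouched and $w(v)=1$ is irrelevant to which branch we copy. (One subtlety: $v$ itself may or may not be in $B$; this is fine, $B'$ agrees with $B$ on $\{v\}$ and on $V(T)\setminus\bigcup V(T_i)$.)

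The heart of the argument is: $\ptp(T;B') \le \ptp(T;B)$. Here I would use the structure of PSD-forcing on a tree together with the branch decomposition at $v$. The key observation is that in PSD-forcing, the propagation inside a branch $T_i$ during the process on $(T,B)$ depends only on (i) the initial blue set restricted to $V(T_i)$, and (ii) the time at which $v$ (the unique cut vertex attaching $T_i$ to the rest of $T$) becomes blue — once $v$ is blue, $T_i$ together with $v$ behaves like an isolated forcing problem on $T_i \cup \{v\}$, and before $v$ is blue the only interaction across the cut is a force from within $T_i$ into $v$ or a force from $v$ into $T_i$. More precisely, let $t_i$ be the time-step at which the last vertex of $T_i$ gets colored in the process on $(T,B)$, and let $\tau$ be the time at which $v$ gets colored. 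I would want to show that after symmetrizing, $v$ still gets colored by time $\tau$ (it can only get colored sooner, since now \emph{every} branch looks like the "best" branch $T_1$, and $v$ was colored in the original process either from outside $\bigcup V(T_i)$ — unaffected — or from some branch $T_j$, and the copied branch $T_1$-pattern forces into $v$ no later than the original $T_1$ did, hence no later than whichever branch actually did it, provided I pick $T_1$ to also be fastest-to-$v$ among minimum-weight branches, or handle the two optimality criteria by a careful choice), and then that each branch completes by time $\max_i t_i \le \ptp(T;B)$. Since every symmetrized branch is an isometric copy of $T_1$ with the same initial blue pattern and $v$ available no later, each completes no later than $T_1$ did in the original process, so the overall propagation time does not increase.

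I expect the \textbf{main obstacle} to be the bookkeeping around the vertex $v$ and the two-way interaction at the cut vertex: I need a clean lemma saying that, in a tree, the forcing process decomposes across a cut vertex $v$ into independent subprocesses on $T_i \cup \{v\}$, coupled only through the single time-value "when is $v$ blue," and that $v$'s coloring time is determined by the subprocesses that can reach it. Making this precise — and in particular verifying that choosing $T_1$ to minimize both $w(B\cap V(T_1))$ and (secondarily) the time it would force into $v$ does not conflict — is where care is needed; one may need to treat separately the case where $v\in B^{(0)}$, the case where $v$ is forced from outside $\bigcup V(T_i)$, and the case where $v$ is forced from inside some branch. Once that decomposition lemma is in hand, the symmetrization argument — copy the cheapest, fastest branch everywhere, check $w(B')\le w(B)$ and $\ptp(T;B')\le\ptp(T;B)$, conclude $\throtplus(T;B') \le \throtplus(T;B) = \throtplus(T)$, and observe $B'$ is symmetric by construction — is routine. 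Finally, iterating over all the (finitely many) symmetry relations among branches at $v$, or arguing that a single simultaneous copy suffices, yields a genuinely symmetric optimal set.
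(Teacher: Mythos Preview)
Your overall architecture matches the paper's: start from an optimal $B$, split into cases according to how $v$ gets colored, and copy one branch's pattern into the others. The weight half of the argument is fine. The gap is exactly the obstacle you flag but do not close: when $v$ is forced from inside some branch $T_k$, there is no reason the \emph{minimum-weight} branch is also a \emph{fastest-to-$v$} branch, and ``pick $T_1$ to also be fastest-to-$v$ among minimum-weight branches'' does not help, because the branch that actually forced $v$ in the original process may have strictly larger weight than every minimum-weight branch. If you copy the cheap pattern $B_1$ into all branches, $v$ can be colored strictly later (or not at all from the branches), and then propagation in the rest of $T$ and in the other branches can be delayed. So $\ptp(T;B')\le\ptp(T;B)$ is not established.

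The paper resolves this using the hypothesis $w(v)=1$, which your argument never invokes substantively. In the problematic case ($v$ forced from some $T_k$ with $w(B_k)>w(B_i)$ for the other $i$), the paper does \emph{not} try to find a branch that is simultaneously cheap and fast; instead it replaces $B_k$ by a copy of a cheap $B_i$ \emph{and adds $v$ itself to the initial set}. Because weights are integers and $w(B_k)\ge w(B_i)+1$ while $w(v)=1$, the total weight does not increase; and with $v$ blue at time $0$, each branch now runs independently with the cheap pattern, so propagation time does not increase either. The example immediately following the theorem in the paper shows this use of $w(v)=1$ is essential: without it, symmetric optimal sets need not exist, so any proof that does not exploit $w(v)=1$ at the critical step cannot succeed.
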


\begin{proof}
Let $B$ be an optimal full forcing set of $(T,w)$, and define $B_i=B\cap V(T_i)$ for $i=1,\dots,\ell$.  There are two cases, depending on the role of $v$.

First, suppose $v\in B$ or $v$ can be forced by a vertex not in $\cup_{i=1}^\ell V(T_i)$ in at most $\ptf(T;B)$ time-steps. In particular, this means the full forcing process happens independently in each $T_i$. Without loss of generality, $w(B_1)\le w(B_i)$ for $i=2,\dots,\ell$.  Since forcing in all branches concludes in at most $\ptf(T;B)$ time-steps, if $w(B_1)<w(B_i)$ we could replace $B_i$ by $\sigma_i(B_1)$ and  $B$ was not optimal.  Thus, $w(B_i)=w(B_1)$ for $i=2,\dots,\ell$, and we can replace $B_i$ by $\sigma_i(B_1)$ to get an optimal full forcing set that \emph{is} weight isomorphic with respect to $T_v$.

Next we consider the case  where $v$ is forced by a vertex in some $T_k$, where $1\leq k \leq \ell$. Let $H_i=T[V(T_i)\cup\{v\}]$ for $i=1,\dots,\ell$. We may assume, without loss of generality, that $\{B_i \mid i\neq k  \}$ is weight isomorphic with respect to $\{T_i \mid i\neq k  \}$. Thus, for the remainder of this proof, assume $i\in\{1,\dots,\ell\}$  with $i \neq k$. If $w(B_k)\le w(B_i)$, we could replace $B_i$ by $\sigma_i(\sigma_k^{-1}(B_k))$ -contradicting the optimality of $B$- so we assume $w(B_k)>w(B_i)$. We may also assume $\ptf(H_k;B_k)<\ptf(H_i;B_i)$, or else replacing $B_k$ with $\sigma_k(\sigma_i^{-1}(B_i))$ also contradicts the optimality of $B$. Now, consider $B' = (B\setminus B_k)\cup \sigma_k(\sigma_i^{-1}(B_i)) \cup \{v\}$. Clearly, $\ptf(H_i; B_i \cup \{v\})\leq \ptf(T;B)$, as any forcing caused by $v$ under propagation from $B$ now occurs sooner. Further, since $w(B_k)>w(B_i)$ and $w(v)=1$, $w(B')\leq w(B)$. Thus, $B'$ is an optimal full forcing set for $(T,w)$ which is weight isomorphic with respect to $T_v$.
\end{proof}

It should be noted that there are cases without the condition ``$w(v)=1$", which do not have a weight isomorphic optimal set, as demonstrated with the following example.

\begin{ex}
Consider the balanced spider $T=T_{3,7}\ceq S(7,7,7)$ with center vertex $c$. Let $A=\{ x\in V(T) \mid \dist(c,x)=1\}$,
$B=\{ x\in V(T) \mid \dist(c,x)=5\}$, and suppose $T$ is given weight function
\[w(x)=\begin{cases}
1 & \text{if } \dist(c,x)\in \{1,5\} \\
10 & \text{otherwise}.
\end{cases}\]
First, note that $\thf(T;A)= w(A) + \ptf(T;A)= 3 + 6 =9$, and thus no starting set containing a weight 10 vertex can be optimal. Thus, the only weight isomorphic starting sets that \emph{could} be optimal are $A$, $B$, and $A\cup B$, with $\thf(T;B)=8$, and $\thf(T;A\cup B)=8$. However, if our starting set is $B\cup\{a\}$ where $a\in A$, we get \[\thf(T;B\cup\{a\}) = 4 + 3 = 7. \]
Thus, \emph{no} weight isomorphic set is optimal.
\end{ex}

This construction uses a low cost vertex $v$ near the center $c$ to force through to vertices in other branches, thereby reducing the overall propagation time. However, if $w(c)=1$, then replacing $v$ with $c$ cannot increase the cost, as $w(v)$ is a positive integer. Further, the time need for $v\to c$ is the same as $c\to v$, and the vertices in other branches that $v$ was forcing get forced from $c$ sooner, meaning propagation in all other branches finishes in at most the same amount of time.

\begin{defn}\label{symB}
Let $(T,w)$ be an integer weighted tree. Suppose $\{T_1,\dots,T_\ell\}$ is a maximal set of pairwise weight isomorphic branches of $T$ at vertex $v\in V(T)$, such that $w(v)=1$.
A {\em single concentration of $T$ at $v$} is the weighted tree $T'=T-\{T_2,\dots,T_\ell\}$ with weight function
\[w'(x) = \begin{cases}
\ell w(x)  &\mbox{if }x\in{V(T_1)} \\
w(x) &\mbox{otherwise.}
\end{cases}\]
Each graph formed by one or more iterations of this process is called a \emph{concentration} of $T$.
\end{defn}

\begin{thm}\label{gconc}
Let $(T,w)$ be an integer weighted tree, and let $(T',w')$ be a concentration of $T$. Then,
\[\thf(T)=\thf(T').  \]
\end{thm}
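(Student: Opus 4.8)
The plan is to reduce to a single concentration and then induct on the number of concentration steps, so fix a single concentration of $(T,w)$ at a vertex $v$ with $w(v)=1$ whose maximal set of pairwise symmetric branches is $\{T_1,\dots,T_m\}$, with symmetry maps $\sigma_i\colon V(T_i)\to V(T_1)$. Note that $V(T')=V(T)\setminus\bigcup_{i=2}^m V(T_i)$ is a convex subtree of $T$ (any path of $T$ between two of its vertices passes through $v$ at most once), whose induced subgraph is $T'$ as an unweighted graph. By the Observation preceding the theorem, $B\mapsto B':=B\cap V(T')$ is a bijection from the symmetric subsets of $V(T)$ onto the subsets of $V(T')$. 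I will show that for every symmetric $B$ one has $w'(B')=w(B)$ and $\ptp(T';B')=\ptp(T;B)$, hence $\thp(T',w';B')=\thp(T,w;B)$. Granting this, applying it to the optimal symmetric set for $T$ furnished by Theorem~\ref{sym} (legitimate since $w(v)=1$) gives $\thp(T')\le\thp(T)$, and applying it to the symmetric preimage of an optimal set for $T'$ gives $\thp(T)\le\thp(T')$.

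The weight identity is a one-line computation: writing $B_0=B\setminus\bigcup_{i=1}^m V(T_i)$ and $B_1=B\cap V(T_1)$, symmetry yields $w(B\cap V(T_i))=w(B_1)$ for each $i$, so $w(B)=w(B_0)+m\,w(B_1)$, while by the definition of $w'$ we get $w'(B')=w(B_0)+\sum_{x\in B_1}m\,w(x)=w(B_0)+m\,w(B_1)$.

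For the propagation identity I will prove the stronger statement that, for every time-step $k$, the set $B^{(k)}$ produced by PSD-propagation in $T$ is symmetric and $(B')^{(k)}=B^{(k)}\cap V(T')$, where $(B')^{(k)}$ is produced by PSD-propagation in $T'$; the case $k=0$ is the hypothesis. Assume the statement at step $k-1$. First, each $\sigma_i$ extends to an automorphism of $T$ fixing every vertex outside $V(T_1)\cup V(T_i)$ and preserving $B^{(k-1)}$, hence commuting with CCR-$\Zp$; so $B^{(k)}$ is symmetric, and a vertex of $V(T_i)$ is forced at step $k$ exactly when its $\sigma_i$-image in $V(T_1)$ is. It then remains to show that a white vertex $x\in V(T')$ is forced at step $k$ in $T$ if and only if it is forced at step $k$ in $T'$. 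For $x\in V(T')\setminus\{v\}$ every neighbour of $x$ lies in $V(T')$, and convexity of $V(T')$ gives that the component of $x$ in $T-B^{(k-1)}$ meets $V(T')$ in exactly the component of $x$ in $T'-(B')^{(k-1)}$; hence the PSD color-change condition for $x$ is verbatim the same in $T$ and in $T'$. The delicate case --- and the one I expect to require the most care --- is the vertex $v$ together with the branch-roots $r_1,\dots,r_m$ and the neighbour $p$ of $v$ above it, since in $T$ the vertex $v$ may have several white neighbours while in $T'$ it has only $r_1$ among the branch-roots; one uses that $B^{(k-1)}$ is symmetric, so $r_1,\dots,r_m$ are simultaneously all blue or all white. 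If they are all white, then in $T$ the vertex $v$ has at least two white branch-neighbours, so $v$ neither forces nor is forced by any $r_i$, matching $T'$ where $v,r_1$ lie in a common white component and so $v$ can only be forced, if at all, by $p$ under the same condition as in $T$. If they are all blue, then in both $T$ and $T'$ the relevant component of $v$ lies entirely on the $p$-side, and the color-change conditions at $v$, $r_1$, and $p$ again coincide (with forces into $v$ from different $r_i$'s matched by symmetry). Carrying this case analysis through establishes the claim, and then $B^{(k)}=V(T)$ if and only if $(B')^{(k)}=V(T')$ (using symmetry to recover the deleted branches from $V(T')$; in particular $B$ PSD-forces $T$ iff $B'$ PSD-forces $T'$), whence $\ptp(T;B)=\ptp(T';B')$ and the proof is complete.
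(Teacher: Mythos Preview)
Your proof is correct and follows the same route as the paper's: reduce to a single concentration, invoke Theorem~\ref{sym} to obtain a symmetric optimal set, and use the bijection between symmetric subsets of $V(T)$ and subsets of $V(T')$ together with the equalities $w(B)=w'(B')$ and $\ptp(T;B)=\ptp(T';B')$. You simply unpack what the paper dismisses with ``Clearly'' (the step-by-step propagation argument and the weight computation) and make both inequalities $\thp(T')\le\thp(T)$ and $\thp(T)\le\thp(T')$ explicit, whereas the paper's text only spells out the first; one cosmetic point is that $v$ may have several neighbours outside $\bigcup_i V(T_i)$ or none at all, so your single ``$p$'' should be read as ranging over all such neighbours.
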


\begin{proof}
Let $T_v=\{T_1,\dots,T_\ell\}$ be the pairwise weight isomorphic branches of $T$ that are concentrated in $T'$. Notice that each set vertices in $T$ that is weight isomorphic with respect to $T_v$ corresponds to exactly one set of vertices in $T'$. 
By Theorem \ref{sym}, there is an optimal set $B$ for $T$ that is weight isomorphic with respect to $T_v$. Let $B'$ be the subset of $V(T')$ corresponding to $B$. 
Clearly, $w(B)=w'(B')$ and $\ptf(T;B)=\ptf(T';B')$, so $\thf(T)\geq\thf(T')$.

Similarly, let $B'$ be an optimal full throttling set of $T'$, and $B$ be the corresponding set of vertices in $T$, which is weight isomorphic with respect to $T_v$. Again, $w(B)=w'(B')$ and $\ptf(T;B)=\ptf(T';B')$, so $\thf(T)\leq\thf(T')$
\end{proof}
The 
concentration approach can simplify  proofs and computations of the full throttling number, especially those with a high degree of symmetry.  

\begin{ex}
Consider $T$, a full binary tree of height $h$.  Suppose $w(v)=1$ for all $v\in V(T)$, and let $c$ denote the root vertex of $T$. Then, consider each vertex at distance $h-1$ from $c$. Each has a weight of 1, and has two weight isomorphic branches (just leaves). Performing a concentration then merges each leaf pair, doubling the cost of the vertices in each branch. Then one can move to the vertices at distance $h-2$ from $c$, each of which has weight 1, and two weight isomorphic branches which are paths. Again, concentrating these paths doubles the weights of the merged vertices. Iterating this concentration process towards $c$ thus results in a path of length $h$, with a weight sequence $2^0,2^1,2^2,\dots,2^h$.
Thus by Theorem \ref{gconc} it's easy to see that $\thf(T')=h+1$ by choosing the vertex that costs only 1, and thus $\thf(T)=h+1$ by choosing the center vertex.
\end{ex}

\section{Spiders}\label{spiders}
In \cite{CRT}, Breen et al.~give an algorithm  that constructs, for any tree $T$,  an initial coloring set $B\subseteq V(T)$ such that  $\thf(T;B)\leq2\sqrt{n}$. Since \cite{PSDT} noted that all paths have a full throttling number approximately $\sqrt{2}\sqrt{n}$, the authors of \cite{CRT} posed an interesting question: What is the smallest coefficient $\mu$ such that for all trees $T$, asymptotically $\thf(T)\lesssim \mu\sqrt{n}$? Or, which trees have the highest full throttling number across all trees on $n$ vertices, and what is that number?
  It was originally thought by some that balanced spiders might provide a family of examples for which $\thf(T)\approx  \mu\sqrt{n}$ with $\mu>\sqrt 2$.  However, we show in this section that 
this is not possible, after determining the exact value of the full throttling number of a balanced spider.

Recall that the (unweighted) balanced spider with $\a$ legs of order $\b$  is $T_{\alpha,\beta}$; which has $\alpha\beta +1$ vertices. Note that $T_{\alpha,\beta}$ has $\a$ weight isomorphic branches at the center vertex $c$, all of which are paths of order $\b$. Thus, $T_{\a,\b}$ can be concentrated to a weighted path of order $\b + 1$, wherein one end vertex (which inherits the label $c$) has weight one, and all other vertices have weight $\a$.

\begin{obs}\label{time}
If $s$ vertices in each leg of $T_{\a,\b}$ (i.e. $s$ non-$c$ vertices from the complete concentration) are optimally chosen and c is \emph{not} chosen, the full forcing propagation time is $\lc\frac{\b+1-s}{2s}\rc$. If $c$ \emph{is} chosen, the full forcing propagation time is $\lc\frac{\b-s}{2s+1}\rc$.
\end{obs}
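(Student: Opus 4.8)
The plan is to compute the PSD-propagation time directly on the fully concentrated weighted path $c=v_0,v_1,\dots,v_\b$, which by the concentration theorem has the same propagation time as $T_{\a,\b}$ for any symmetric set, and then minimize over the placement of the $s$ chosen non-$c$ vertices. First I would record how CCR-$\Zp$ propagates once blue vertices have been fixed at positions $p_1<\cdots<p_s$ in $\{1,\dots,\b\}$. The white vertices split into maximal segments, and each segment's forcing time depends only on how many of its ends are bounded by a blue vertex: a segment of length $g$ bounded by blue on \emph{both} sides is forced inward from both ends in $\lceil g/2\rceil$ steps, while a segment bounded on only \emph{one} side (the leaf end, or the $c$-end when $c\notin B$) is forced from that single end in time equal to its length. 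The one point that needs care is the behavior at $c$: because the rule is applied component-by-component, when $c\in B$ the vertex $c$ has exactly one white neighbor inside each leg's inner component of $G[W_i\cup B]$, so $c$ forces simultaneously into every leg, and the inner segment between $c$ and $p_1$ is genuinely a two-sided segment of length $p_1-1$. This is exactly why concentration preserves the propagation time, and it is what ultimately separates the two formulas.

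With the per-segment times in hand, the propagation time is the maximum of the segment times, and I would convert the statement ``propagation finishes within $T$ steps'' into a covering inequality. When $c\notin B$: the $c$-end segment forces $p_1\le T$, each interior gap forces $p_{i+1}-p_i\le 2T+1$, and the leaf end forces $\b-p_s\le T$; summing the telescoping identity $\b+1=(p_1+1)+\sum_{i}(p_{i+1}-p_i)+(\b-p_s)$ gives $\b+1\le s(2T+1)$, hence $T\ge\lceil(\b+1-s)/(2s)\rceil$. When $c\in B$, the $c$-end segment instead contributes $p_1-1\le 2T$, and the same bookkeeping — now counting $c$ itself as an extra covered vertex — yields $\b\le s+T(2s+1)$, hence $T\ge\lceil(\b-s)/(2s+1)\rceil$. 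These give the two lower bounds.

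Finally I would show the bounds are attained by an explicit near-uniform placement: fix $T$ to the claimed value and place the $s$ vertices with consecutive gaps as close as possible to the extremal value $2T+1$ (with first/last offsets $T$, or inner offset $2T$ when $c\in B$), then check that every position in $\{0,\dots,\b\}$ is covered within $T$ steps. Because $s(2T+1)\ge\b+1$ (respectively $T(2s+1)+s\ge\b$), there is enough room to realize such a placement; the only fuss is distributing the rounding slack $s(2T+1)-(\b+1)$ across the segments so that no single segment exceeds its time budget, which is routine.

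I expect the main obstacle to be the bookkeeping at the center: correctly arguing from the component-wise application of CCR-$\Zp$ that $c$ forces into all $\a$ legs in a single time-step (so the $c\in B$ inner segment is two-sided rather than one-sided). This is the step that makes the concentrated path a faithful model and that produces the denominator $2s+1$ rather than $2s$; once it is settled, the counting argument and the achievability construction are both straightforward.
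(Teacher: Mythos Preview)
The paper gives no proof of this statement: it is recorded as an Observation and used without further justification. Your argument is correct and is precisely the natural computation the paper is taking for granted --- reducing to the concentrated path, identifying the one-sided versus two-sided segments, converting ``finish within $T$'' into the covering inequalities $\b+1\le s(2T+1)$ and $\b\le s+T(2s+1)$, and then exhibiting a near-uniform placement achieving the bound. Your treatment of the center vertex (why the $c$-end segment is one-sided when $c\notin B$ but two-sided when $c\in B$, via the component-wise application of CCR-$\Zp$) is exactly the point that distinguishes the two denominators, and you handle it correctly.
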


\begin{lem}\label{cent}
Every (unweighted) balanced spider with at least three legs has an optimal full throttling set containing the center vertex.
\end{lem}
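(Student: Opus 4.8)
The plan is to leverage the concentration picture already set up: $T_{\alpha,\beta}$ concentrates to a weighted path on $\beta+1$ vertices, with the end vertex $c$ carrying weight $1$ and every other vertex carrying weight $\alpha$. So it suffices to show this weighted path has an optimal PSD-forcing set containing $c$, and then pull back. Actually, since the lemma is about the unweighted spider directly, I would instead argue in $T_{\alpha,\beta}$ itself, starting from an arbitrary optimal set $B$ and modifying it to include $c$ without increasing $\throtplus$.

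First I would invoke Theorem \ref{sym} to assume $B$ is symmetric across the $\alpha$ legs; write $B_i = B\cap V(\ell_i)$ where $\ell_i$ is the $i$-th leg, so all $B_i$ have the same size $s$ (in concentrated terms). If $c\in B$ we are done, so assume $c\notin B$. The key observation is that if $c\notin B$, then during propagation $c$ is eventually forced by a vertex in some single leg, say $\ell_1$ (it cannot be forced until only one of its white neighbors remains, which happens once all but one leg is fully blue — but by symmetry all legs finish simultaneously, so in fact this case is delicate). The cleaner route: compare the two propagation-time formulas in Observation \ref{time}. With $s$ vertices chosen per leg and $c$ not chosen, the time is $\lceil (\beta+1-s)/(2s)\rceil$; if we instead add $c$ to $B$ and keep $s$ vertices per leg, the cost goes up by $w(c)=1$ but the time drops to $\lceil(\beta-s)/(2s+1)\rceil$, which is strictly smaller (for the relevant ranges) by at least $1$. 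Hence adding $c$ does not increase the throttling number. The one subtlety is the boundary case where $s$ is already large enough that the propagation time is $0$ or $1$; there adding $c$ might not help, but in those cases one can instead \emph{remove} a vertex from each leg and add $c$, trading $\alpha$ units of weight for $1$, which strictly decreases the cost — contradicting optimality unless the time was already forced up by that removal, which a short case check rules out when $\alpha\ge 3$.

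So the steps, in order, are: (1) reduce to a symmetric optimal $B$ via Theorem \ref{sym}; (2) assume $c\notin B$ and let $s$ be the common number of chosen vertices per leg; (3) case on whether the symmetric propagation happens independently in each leg (so $c$ gets forced last, from inside a leg, or from multiple legs simultaneously) — use Observation \ref{time} to write the propagation time as a function of $s$; (4) show that replacing the configuration "$s$ per leg, no $c$" by "$s$ per leg, plus $c$" changes throttling by $+1$ in weight and $-(\text{at least }1)$ in time when the original time is $\ge 2$; (5) handle the small-time boundary cases separately, using that $\alpha\ge 3$ so that swapping one per-leg vertex for $c$ is a strict improvement, contradicting optimality. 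Conclude there is an optimal set containing $c$.

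The main obstacle I expect is step (4)–(5): making the comparison of ceilings $\lceil(\beta+1-s)/(2s)\rceil$ versus $\lceil(\beta-s)/(2s+1)\rceil$ precise enough to guarantee the time drops by at least one in all but finitely many boundary configurations, and then cleanly dispatching those boundary configurations using the hypothesis $\alpha\ge 3$ (which is exactly where two legs is insufficient — with two legs the center is an interior path vertex and offers no symmetry advantage). I would want a clean inequality lemma, perhaps folding in the triangle-number reformulation from Lemma \ref{triLem} to get the "elegant cancellation" the authors allude to, rather than grinding the ceilings directly.
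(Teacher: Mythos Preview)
Your overall architecture matches the paper's: reduce to a symmetric optimal set, write the throttling number of the ``$s$ per leg, no $c$'' configuration as $g(\alpha,\beta,s)=\alpha s+\lceil(\beta+1-s)/(2s)\rceil$, and compare it against the two with-$c$ alternatives $h(\alpha,\beta,s)$ (add $c$, keep $s$) and $h(\alpha,\beta,s-1)$ (add $c$, drop one per leg). The paper's proof is exactly the claim that for every triple $(\alpha,\beta,s)$ with $\alpha\ge 3$, at least one of $h(\alpha,\beta,s)\le g(\alpha,\beta,s)$ or $h(\alpha,\beta,s-1)\le g(\alpha,\beta,s)$ holds. So you have identified the right two moves.

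Where your plan goes wrong is the criterion for choosing between them. You assert in step (4) that whenever the propagation time $\lceil(\beta+1-s)/(2s)\rceil$ is at least $2$, adding $c$ drops the time by at least $1$, and that the only ``boundary'' is time $\le 1$. This is false: take $\beta=9$, $s=2$. Then $\lceil 8/4\rceil=2$, but $\lceil 7/5\rceil=2$ as well, so adding $c$ costs $+1$ with no time savings. (In this particular instance your \emph{other} move --- drop to $s=1$ and add $c$ --- does work, giving $1+\alpha+\lceil 8/3\rceil$, which beats $2\alpha+2$ for $\alpha\ge 3$; but your stated dichotomy would never try it here since the time was $2$, not $\le 1$.) So the split ``time $\ge 2$ $\Rightarrow$ use move (4); time $\le 1$ $\Rightarrow$ use move (5)'' does not cover all cases, and your ``short case check'' in step (5) is not where the real work lies.

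The paper sidesteps the ceiling arithmetic entirely by comparing the \emph{real-valued} versions of $g$ and $h$, observing that for fixed $\alpha,s$ all three are linear in $\beta$, and locating the crossover $b_0=4s^2+s-1$ where $g(\alpha,\beta,s)=h(\alpha,\beta,s)$: for $\beta\ge b_0$ one has $h(s)\le g(s)$, and for $\beta\le b_0$ one checks $h(s-1)\le g(s)$ at the single point $b_0$ (the slope ordering then propagates it downward). The hypothesis $\alpha\ge 3$ enters only in that last verification. If you want to salvage your outline, replace the ``time $\ge 2$ / time $\le 1$'' split with this $\beta$-threshold split; the triangle-number reformulation from Lemma~\ref{triLem} is not used here.
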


\begin{proof} For $\a\ge 3$, $\b, s\ge 1$,  define
\[g(\a,\b,s)=\a s+\frac{\b+1-s}{2s},\] which corresponds to the full throttling number when $s$ vertices from each leg of $T_{\a,\b}$ are chosen and $c$ is not.
Similarly, for $\a\ge 3, \b\ge 1, s\ge 0$,  define
\[h(\a,\b,s)=1+\a s+\frac{\b-s}{2s+1},\] corresponding to the full throttling number when $c$ is chosen in addition to the $s$ vertices chosen from each leg.
It suffices to show that for every triple $(\a,\b,s)$ with $\a\ge 3$, $\b, s\ge 1$, 
\[h(\a,\b,s)\le g(\a,\b,s)\ \mbox{ or }\ h(\a,\b,s-1)\le g(\a,\b,s).\]
Observe that for  fixed $\a$ and $s$, both $h$ and $g$ are linear functions in $\b$.  The slopes are
\bea \frac{d g(\a,\b,s)}{d\b}\Big|_{s}&=&\frac 1{2s}\\
\frac{d h(\a,\b,s)}{d\b}\Big|_{s}&=&\frac 1{2s+1} <\frac 1{2s}\\
\frac{d h(\a,\b,s)}{d\b}\Big|_{s-1}&=&\frac 1{2s-1}>\frac 1{2s},
\eea
and the intercepts are
\bea 
g(\a,0,s)&=&\a s+\frac {1-s}{2s}=\a s-\frac 1 2+\frac 1{2s}\\
h(\a,0,s)&=&1+\a s -\frac{s}{2s+1}=1+\a s -\frac 1 2 + \frac{1}{4s+2}>\a s-\frac 1 2+\frac 1{2s}\\
h(\a,0,s-1)&=& 1-\a+\a s -\frac{s-1}{2s-1}=1-\a+\a s -\frac 1 2 -\frac{1}{4s-2}< \a s-\frac 1 2+\frac 1{2s}.\\
\eea

Fix $\a$ and $s$.  Define $b_0$ to be the value of $\b$ for which $h(\a,\b,s)=g(\a,\b,s)$, so $b_0=-1 + s + 4 s^2$.
Then, $h(\a,\b,s)\le g(\a,\b,s)$ for $\b\ge b_0$.  Once we show that $h(\a,b_0,s-1)\le g(\a,b_0,s)$, it follows that $h(\a,\b,s-1)\le g(\a,\b,s)$ for $\b\le b_0$, completing the proof.
\[ g(\a,b_0,s)-h(\a,b_0,s-1)=\frac{\a (2 s-1)-4 s+1}{2 s-1}\]
Since $2s-1\ge 1$ it suffices to show that ${1+\a (2 s-1)-4 s}\ge 0$. 
Since $\a\ge 3$ and $s\ge 1$,
\[
1+\a (2 s-1)-4 s\ge 1+3 (2 s-1)-4 s
=1  + 6 s-3-4s
=2s-2
\ge0.\qedhere\]
\end{proof}

\begin{thm}\label{thm:bal-spider-bd}
For the balanced spider $T=T_{\al,\bt}$ with $\a\geq3$, $\thf(T)=1 + \al \s + t$ where
\[\s = \lf \sqrt{\frac{2\bt + \al + 1}{4 \al}} \rf \text{\hspace{2em}and\hspace{2em}} t = \lc \frac{\bt - \s}{2\s+1} \rc= \lc \frac{\bt +\frac 1 2}{2\s+1}-\frac 1 2 \rc. \]
\end{thm}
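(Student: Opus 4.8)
The plan is to turn the theorem into a one‑parameter integer optimization and then solve that optimization with a convexity/unimodality argument that carefully tracks the ceiling function.

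\emph{Step 1: reduction to $\min_{s\ge0}H(s)$.} Set $H(s)\ceq 1+\al s+\lceil(\bt-s)/(2s+1)\rceil$. By Lemma~\ref{cent}, $T$ has an optimal PSD‑forcing set $B$ with the center $c\in B$. Since $c$ is blue, each component of $T-B$ lies inside a single leg, so propagation proceeds independently leg by leg; writing $B_i$ for the part of $B$ in leg $i$ and $H_i$ for the subgraph induced by leg $i$ together with $c$, we get $\ptp(T;B)=\max_i\ptp(H_i;B_i\cup\{c\})$. By Observation~\ref{time} (applied to a single leg), the smallest propagation time attainable by placing $k$ vertices in a leg with $c$ already blue is $\lceil(\bt-k)/(2k+1)\rceil$, so with $m=\min_i|B_i|$, attained at a leg $i_0$, we get $\throtplus(T;B)=1+\sum_i|B_i|+\max_i\ptp(H_i;B_i\cup\{c\})\ge 1+\al m+\lceil(\bt-m)/(2m+1)\rceil=H(m)$. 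Conversely $H(s)$ is attained for every $s\ge0$ (take $c$ plus $s$ optimally placed vertices in each leg, Observation~\ref{time}). Hence $\throtplus(T)=\min_{s\in\ZZ,\ s\ge0}H(s)$; equivalently, this is the PSD‑throttling number of the weighted path obtained by concentrating $T$ at $c$.

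\emph{Step 2: unimodality of $H$.} The two ingredients are the identity $\frac{\bt-s}{2s+1}-\frac{\bt-s-1}{2s+3}=\frac{2\bt+1}{(2s+1)(2s+3)}$ (clear denominators) and the elementary bound $\lfloor x-y\rfloor\le\lceil x\rceil-\lceil y\rceil\le\lceil x-y\rceil$ for all reals $x,y$. Put $g(s)=\frac{\bt-s}{2s+1}$, so $H(s)=1+\al s+\lceil g(s)\rceil$. Because $\al\in\ZZ$, these ingredients give: if $(2s+1)(2s+3)\ge\frac{2\bt+1}{\al}$ then $g(s)-g(s+1)\le\al$, hence $\lceil g(s)\rceil-\lceil g(s+1)\rceil\le\al$ and $H(s+1)\ge H(s)$; and if $(2s+1)(2s+3)<\frac{2\bt+1}{\al}$ then $g(s)-g(s+1)>\al$, hence $\lceil g(s)\rceil-\lceil g(s+1)\rceil\ge\al$ and $H(s+1)\le H(s)$. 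Since $(2s+1)(2s+3)$ increases with $s\ge0$, $H$ is non‑increasing and then non‑decreasing, so $\min_{s\ge0}H(s)=H(s^\dagger)$, where $s^\dagger$ is the least $s\ge0$ with $(2s+1)(2s+3)\ge\frac{2\bt+1}{\al}$.

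\emph{Step 3: identifying $s^\dagger$ with $\s$.} Using $(2s+1)(2s+3)=4(s+1)^2-1$, the inequality defining $s^\dagger$ is $(s+1)^2\ge\frac{2\bt+\al+1}{4\al}$; writing $\rho=\sqrt{(2\bt+\al+1)/(4\al)}$ and noting $\rho>0$, this says $s\ge\rho-1$, so $s^\dagger=\lceil\rho-1\rceil=\lceil\rho\rceil-1$. If $\rho\notin\ZZ$ this is $\lfloor\rho\rfloor=\s$, and we are done. If $\rho\in\ZZ$ then $s^\dagger=\rho-1=\s-1$; but then $(2s^\dagger+1)(2s^\dagger+3)=4\rho^2-1=\frac{2\bt+1}{\al}$ exactly, so $g(s^\dagger)-g(s^\dagger+1)=\al$ is an integer, whence $\lceil g(s^\dagger)\rceil-\lceil g(s^\dagger+1)\rceil=\al$ and $H(\s)=H(s^\dagger+1)=H(s^\dagger)$ still attains the minimum. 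In every case $\throtplus(T)=H(\s)=1+\al\s+\lceil(\bt-\s)/(2\s+1)\rceil$, and the identity $\frac{\bt-\s}{2\s+1}=\frac{\bt+1/2}{2\s+1}-\frac12$ yields the alternative form of $t$.

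The main obstacle is the ceiling in $\ptp$: without it $H$ would be the restriction to $\ZZ$ of a convex function, but as stated $H$ could a priori have spurious local minima, so the real content is the sandwich estimate in Step~2 showing that the sign of $H(s+1)-H(s)$ is controlled entirely by whether $(2s+1)(2s+3)$ has passed $\frac{2\bt+1}{\al}$. The one remaining subtlety is the off‑by‑one between $s^\dagger$ and $\s$ in the exceptional case $\rho\in\ZZ$ — which is exactly the case where the optimum is a tie between $\s-1$ and $\s$ — and this is absorbed by the exact‑equality computation above.
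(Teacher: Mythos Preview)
Your proof is correct. Both you and the paper reduce, via Lemma~\ref{cent} and Observation~\ref{time}, to minimizing over the single integer parameter $s$; the difference is in how the minimization is carried out. The paper drops the ceiling and studies the real-valued $h(\alpha,\beta,s)=1+\alpha s+(\beta-s)/(2s+1)$ as a family of linear functions of $\beta$, locating the crossover values $\beta_s$ where $h(\cdot,\cdot,s-1)=h(\cdot,\cdot,s)$ and inverting to get $\hat s$; the passage from the continuous $h$ to the actual throttling value $H(s)=1+\alpha s+\lceil(\beta-s)/(2s+1)\rceil$ is left implicit. You instead attack $H$ directly, proving it is unimodal in $s$ via the clean identity $g(s)-g(s+1)=\dfrac{2\beta+1}{(2s+1)(2s+3)}$ together with the sandwich $\lfloor x-y\rfloor\le\lceil x\rceil-\lceil y\rceil\le\lceil x-y\rceil$. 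This buys you a fully rigorous argument that the ceiling never shifts the optimizer, and your explicit treatment of the boundary case $\rho\in\ZZ$ (where $H(\hat s-1)=H(\hat s)$ exactly) is a nice touch the paper omits. The paper's viewpoint, on the other hand, yields the piecewise-linear function $t_S(\alpha,\beta)$ of $\beta$ that is used downstream in Section~\ref{sspiders}.
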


\begin{proof}
By Lemma \ref{cent}, there is an optimal full throttling set $B_0$ containing the center vertex. Let $P_T$ be the concentration of $T$ at the center. Then the value of $t$ follows from Observation \ref{time}, and thus $\thf(P_T; B_0) = 1 +\al s+ t$ where $s$ is the number of vertices of weight $\a$ (that originally came from the legs).

Now, consider the family of real-valued functions 
\[h(\a,\b,s) = 1 +\al s + \frac{\bt-s}{2s+1},  \]
and note that for fixed $\a\geq 3$ and $s$ these are linear functions of $\beta$.

Observe that the sequence of intercepts $\{h(\a,0,s) \}_{s\in \N}$ is strictly increasing in $s$, and that the sequence of slopes $\{h'(\a,\b,s) \}_{s\in \N}$ is strictly decreasing, but is always positive. We consider the sequence $\{\beta_s\}_{s=0}^\infty$, where $\beta_s$ is the value for which $h(\a,\b_s,s-1)=h(\a,\b_s,s)$, i.e. the point at which increasing from $s-1$ to $s$ vertices will not raise and may lower the full throttling number, which is also the values of $b\in \R$ at which the linear functions $h$ intersect.

\begin{eqnarray}
h(\a,\bt_s,s-1)&=& h(\a,\bt_s,s)\nonumber \\
1 + \al (s-1) + \frac{\bt_s - (s-1)}{2(s-1)+1} &=& 1 + \al s + \frac{\bt_s - s}{2s+1}\nonumber \\
\frac{\bt_s - s+1}{2s-1} &=& \al + \frac{\bt_s - s}{2s+1}\nonumber \\
\frac{\bt_s - s+1}{2s-1} &=& \frac{2\al s + \al +\bt_s - s}{2s+1}\nonumber \\
2\bt_s s - 2s^2+ 2s + \bt_s - s+1 &=& 4\al s^2 + 2\al s + 2\bt_s s -2s^2  -2\al s - \al -\bt_s + s\nonumber \\
2 \beta_s &=& 4\al s^2 -\al -1\nonumber \\
\beta_s &=& 2\al s^2 -\frac{\al+1}{2}. \label{b}
\end{eqnarray}

Solving \eqref{b} for $s$ and taking the floor then gives the optimal choice for $\s$, given any $\b$.
\[\s = \lf\sqrt{\frac{2\bt + \al + 1}{4 \al}}\rf. \qedhere\] 
\end{proof}

Here, we define a continuous variant of the full throttling number for balanced spiders, which will be used in the next section. Let \[t_S(\a,\b) = 1 +\a\s + \hat t  \]
where $\s$ is as defined in Theorem \ref{thm:bal-spider-bd}, and $\hat t = \frac{\b + \frac{1}{2}}{2\s +1}-\frac{1}{2}$. Note that $\hat t$ is obtained by removing the ceiling from $t$ in Theorem \ref{thm:bal-spider-bd}.

\begin{cor}\label{ts}
The functions $t_S(\a,\b)$ are continuous in $\b$, and $\thf(T_{\a,\b})=\ceil{t_S(\a,\b)}$.
\end{cor}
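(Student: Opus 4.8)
The plan is to verify the two claims in Corollary \ref{ts} more or less directly from the definitions, leaning on Theorem \ref{thm:bal-spider-bd}. For continuity in $\bt$: the quantity $\s = \lf \sqrt{(2\bt+\al+1)/(4\al)} \rf$ is a step function of $\bt$, so $t_S(\a,\b) = 1 + \a\s + \hat t$ with $\hat t = (\bt+\tfrac12)/(2\s+1) - \tfrac12$ is piecewise of the form ``constant plus linear in $\bt$'' on each interval where $\s$ is constant, hence continuous on the interior of each such interval. The only possible discontinuities are at the breakpoints $\bt = \bt_s := 2\al s^2 - (\al+1)/2$ from equation \eqref{b}, where $\s$ jumps from $s-1$ to $s$. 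So the real content is to check that the one-sided limits agree there, i.e. that
\[
1 + \a(s-1) + \frac{\bt_s + \tfrac12}{2s-1} - \frac12 \;=\; 1 + \a s + \frac{\bt_s + \tfrac12}{2s+1} - \frac12.
\]
But this is exactly the defining equation $h(\a,\bt_s,s-1) = h(\a,\bt_s,s)$ rewritten with $\hat t$'s ``$+\tfrac12$'' shift absorbed — indeed $h(\a,\b,s) = 1 + \a s + (\b-s)/(2s+1)$ and $1 + \a s + \hat t$ differ by a function independent of $s$, namely $(\b-s)/(2s+1) - \big((\b+\tfrac12)/(2s+1) - \tfrac12\big) = \tfrac12 - (s+\tfrac12)/(2s+1) = 0$. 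So $1 + \a s + \hat t = h(\a,\b,s)$ identically, and continuity at $\bt_s$ is immediate from \eqref{b}.

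For the second claim, $\thp(T_{\a,\b}) = \ceil{t_S(\a,\b)}$: from the last display, $1 + \a\s + \hat t = h(\a,\b,\s)$ where $\s$ is the optimal integer choice from Theorem \ref{thm:bal-spider-bd}. That theorem already gives $\thp(T) = 1 + \a\s + t$ with $t = \lc (\bt+\tfrac12)/(2\s+1) - \tfrac12 \rc = \lceil \hat t \rceil$. So I need $\lceil 1 + \a\s + \hat t \rceil = 1 + \a\s + \lceil \hat t \rceil$, which holds because $1 + \a\s$ is an integer and the ceiling function commutes with adding integers. That closes the proof.

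The main subtlety — and the one place a referee might object — is making sure the endpoints of the intervals on which $\s$ is constant are handled consistently with the floor in the definition of $\s$: at $\bt = \bt_s$ one must confirm both the value $s-1$ and the value $s$ give the same $t_S$ (which the computation above does), so it does not matter which the floor selects. A secondary point worth a sentence: $2\s+1 > 0$ always, so $\hat t$ is well-defined and the linear pieces have no poles; and one should note $\hat t$ need not be nonnegative, but that is harmless since we only take its ceiling and add an integer. I would present the argument as: (1) observe $t_S(\a,\b) = h(\a,\b,\s)$ via the two-line algebraic identity; (2) deduce continuity from piecewise-linearity plus the matching at breakpoints \eqref{b}; (3) deduce $\thp(T_{\a,\b}) = \ceil{t_S(\a,\b)}$ from Theorem \ref{thm:bal-spider-bd} and integrality of $1 + \a\s$.
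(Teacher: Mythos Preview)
Your proof is correct and follows essentially the same approach as the paper: both establish the identity $t_S(\a,\b)=h(\a,\b,\s)$ (you verify it by the explicit algebra, the paper states it directly), use the defining equation \eqref{b} at the breakpoints $\b_s$ for continuity, and invoke integrality of $1+\a\s$ to pull the ceiling outside for the second claim. The only cosmetic difference is that the paper additionally phrases continuity via $t_S(\a,\b)=\min_{s\in\N} h(\a,\b,s)$, whereas you check the one-sided limits at $\b_s$ directly; these are equivalent.
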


\begin{proof}
Note that the second statement follows immediately from the fact that $\a$ and $\s$ are integers. For the first, note that for all $\b$, $t_S(\a,\b)=h(\a,\b,\s)$, and so $t_S(\a,\b) = \min_{s\in\N} h(\a,\b,s)$. Finally, recall that when the optimal value of $s$, changes to $s+1$, it is at the 
$\b$ for which $h(\a,\b,s)=h(\a,\b,s+1)$, and thus $t_S(\a,\b)$ must be continuous.
\end{proof}

In \cite{CRT}, it is shown that the spider $S(4,3,2)$ has a higher full throttling number than the path of the same order. Specifically, $\thf(S(4,3,2))=5=1+\thf(P_{10})$. A computer search of small spiders shows that this is the smallest spider whose full throttling number exceeds that of the path of the same order.  This search, which is described in Appendix 1, also produced several thousand spiders that have full throttling numbers one more than that of the path of the same order.  For example, the full throttling numbers of next few smallest such spiders  $\thf(S(5,4,3,2))=\thf(S(5,4,4,1))=\thf(S(6,4,4))=\thf(S(7,4,3))=6>5=\thf(P_{15})$ and 
$\thf(S(6,5,4,4))=\thf(S(7,5,4,3))=7>6=\thf(P_{20})$.

In light of this, we define a \emph{super-spider} as a spider $S$ for which $\thf(S)>\thf(P_{|V(S)|})$. We give a simple infinite family of super-spiders (the triangle spiders) with exactly ``path plus one" full throttling number below.




\begin{prop}\label{triSpid}
Let $t\in \ZZ^+$ with $t\geq 4$. Then, the spider $S=S(t,t-1,\dots,2)$ with $t-1$ legs on $n=\frac{t(t+1)}{2}$ vertices has full throttling number $\thf(S)= t + 1 = 1 + \thf(P_n)$.
\end{prop}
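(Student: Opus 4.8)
The plan is to establish the two bounds $\throtplus(S)\le t+1$ and $\throtplus(S)\ge t+1$ separately, using the triangle-number identity from Lemma~\ref{triLem} for the path side and connected-minor monotonicity (Theorem~\ref{cMinor}) together with the balanced-spider formula (Theorem~\ref{thm:bal-spider-bd}) for the spider side. Note first that $n=\frac{t(t+1)}{2}$ is the $t$-th triangle number, so by Lemma~\ref{triLem} we have $\throtplus(P_n)=t$; hence the claim is equivalent to showing $\throtplus(S)=t+1$.

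For the upper bound $\throtplus(S)\le t+1$, I would exhibit an explicit initial set. The natural choice is to take the center $c$ together with one carefully chosen vertex on the longest leg (the leg of order $t$), or more generally a small set that lets propagation sweep all legs in roughly $t-2$ further time-steps; since the legs have lengths $2,3,\dots,t$ and the longest is the bottleneck, a set of size roughly $2$ or $3$ with propagation time roughly $t-2$ should give throttling $\le t+1$. Concretely, I expect to pick $B_0$ consisting of $c$ and the vertex at distance $\lceil t/2\rceil$ (or thereabouts) on the length-$t$ leg, and then check that each leg $l_j$ of length $j\le t$ is fully forced within the allotted time: once $c$ is blue, each shorter leg propagates inward-to-outward from $c$ in $j$ steps unless assisted, and the one placed vertex on the long leg splits that leg so it finishes in time too. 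The bookkeeping here is the routine part.

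For the lower bound $\throtplus(S)\ge t+1$, I would argue by contraction/minor monotonicity. Observe that $S=S(t,t-1,\dots,2)$ contains the balanced spider $T_{2,t}=S(t,t)$ — take the two legs of lengths $t$ and $t-1$ and contract one edge to lengthen the shorter to $t$, or more cleanly, note $S$ has a connected minor equal to a long path: contracting all legs except the two longest and then contracting appropriately yields $P_{m}$ for $m$ up to $2t-1$, but that only gives $\throtplus(S)\ge \throtplus(P_{2t-1})$, which by Lemma~\ref{triLem} is $\lceil\sqrt{2(2t-1)}-\tfrac12\rceil$, not quite $t+1$ for large $t$. So the path-minor route alone is too weak; instead I would contract $S$ down to a balanced spider with at least three legs — e.g. contract legs to make three legs each of length about $t-1$ or $t$, giving $T_{3,\ell}$ as a connected minor — and then invoke Theorem~\ref{thm:bal-spider-bd} to compute $\throtplus(T_{3,\ell})$ exactly, choosing $\ell$ so that this value equals $t+1$. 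The key computation is to verify that $1+3\hat s + \hat t = t+1$ for the appropriate $\ell=\ell(t)$, using $\hat s=\lfloor\sqrt{(2\ell+4)/12}\rfloor$ and the triangle-number cancellation that Lemma~\ref{triLem} was advertised to produce (the remark after Lemma~\ref{triLem} explicitly flags this).

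The main obstacle will be the lower bound: one must find the right connected minor (the right subspider with $\ge 3$ legs) whose exact throttling number from Theorem~\ref{thm:bal-spider-bd} comes out to exactly $t+1$, and this requires the triangle-number structure of $n$ to interact just so with the floor in $\hat s$ and the ceiling in $\hat t$. I expect that choosing the minor to be $T_{3,\ell}$ with $\ell$ near $3t^2/?$ will not work directly — rather one likely contracts $S$ to a spider on close to $n$ vertices with exactly three legs of nearly equal length, say lengths summing to $n-1$, and then the content of Lemma~\ref{triLem}'s alternate form $\throtplus(P_n)=\lceil\sqrt{2n+\tfrac14}-\tfrac12\rceil$ is what forces the answer up by one. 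I would also double-check the small base case $t=4$ (the spider $S(4,3,2)$, already known from \cite{CRT} to have throttling $5=1+\throtplus(P_{10})$) as a sanity check that the general argument is calibrated correctly.
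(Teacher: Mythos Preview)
Your upper bound is fine in spirit but overcomplicated: the paper simply colors the center vertex $c$ alone, so that $\ptp(S;\{c\})=t$ (the longest leg has length $t$) and $\thp(S;\{c\})=1+t$. No second vertex or splitting of the long leg is needed.

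The lower bound, however, has a genuine gap that cannot be repaired along the lines you sketch. Any connected minor $T'$ of $S$ has $|V(T')|\le n$, and if $T'=T_{\alpha,\ell}$ is a \emph{balanced} spider then the main theorem of Section~\ref{sspiders} (no balanced super-spiders) gives
\[
\thp(T_{\alpha,\ell})\ \le\ \thp\bigl(P_{|V(T_{\alpha,\ell})|}\bigr)\ \le\ \thp(P_n)\ =\ t.
\]
Thus connected-minor monotonicity through a balanced spider can yield at most $\thp(S)\ge t$, never $t+1$; no choice of $\ell$ will make $1+3\hat s+\hat t$ come out to $t+1$ while $3\ell+1\le n$. (Your path-minor attempt already illustrated the same phenomenon in a weaker form.) The triangle-number structure of $n$ does not help here, because the obstruction is the global inequality $\thp(T_{\alpha,\ell})\le\thp(P_{\alpha\ell+1})$, not a rounding artifact.

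The paper's lower bound is instead a short direct counting argument, with no minors and no Theorem~\ref{thm:bal-spider-bd}. Fix any initial set $B$ and let $p=\ptp(S;B)$. Every leg $l_k$ with $k>p$ must contain at least one vertex of $B$ (its tip is at distance $k>p$ from anything outside the leg), contributing $t-p$ vertices. In addition, the leg $l_p$ cannot be fully forced from outside: the nearest possible blue vertex in any other leg is at distance at least $p+1$ from the tip of $l_p$. Hence $B$ must also contain either a vertex of $l_p$ or the center $c$, contributing one more. Thus $|B|\ge (t-p)+1$ and $\thp(S;B)\ge (t-p+1)+p=t+1$ for every $B$. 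This is the missing idea; once you see it, the proposition is a few lines.
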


\begin{proof}
Let $\ell_k$ be the length $k$ leg of $S$, and let $p=t-i$ be the proposed propagation time of an optimally chosen set $B$ of vertices. First, observe that all legs $\ell_k$ with $k>p$ \emph{must} contain a blue vertex if propagation is going to conclude on time. Next, note that leg $\ell_p$ cannot be fully forced by any vertex in $\ell_{p+1}$, as the distance from $\ell_{p+1}$'s most central vertex and $\ell_p$'s least central vertex is $p+1$. Thus, we must either choose a vertex in $\ell_p$, or choose the center vertex. As the center vertex will guarantee the forcing of all $\ell_k$ with $k\leq p$, this is clearly an optimal choice. Thus we have $\thf(S;B)\geq (i + 1) + p = t+1$. So $\thf(S)\geq t+1$. Note that choosing $p=t$ by just coloring the center vertex gives $\thf(S)\leq t+1$. Then Lemma \ref{triLem} gives $\thf(S)=\thf(P_n)+1$.
\end{proof}

\begin{rem}
As an immediate consequence of Proposition \ref{triSpid}, there is no constant bound on the number of legs a super-spider can have.
\end{rem}

Finally, we show that there are no \emph{balanced} super-spiders. To do so, we will examine the continuous analogues of the full throttling number functions for balanced spiders and paths. $t_S(\a,\b)$ is already defined before Corollary \ref{ts}.  For paths, recall from Lemma \ref{triLem} that $\thf(P_{\a\b+1}) = \ceil{\sqrt{2(\a\b+1)+\frac{1}{4}}-\frac{1}{2}}$. Thus, we define 
\[t_P(\a,\b)\ceq\sqrt{2(\a\b+1)+\frac{1}{4}}-\frac{1}{2}. \] Since $\thf(P_{\a\b+1})$ and $\thf(T_{\a,\b})$ are the respective ceilings of $t_P(\a,\b)$ and $t_S(\a,\b)$, we need only demonstrate that $t_S(\a,\b)\leq t_P(\a,\b)$.

\begin{lem}\label{slopes}
Let $\a\geq3$, and suppose $t_S(\a,\b)\leq t_P(\a,\b)$ for some $\b\geq \b_1$, where $\b_1$ is defined in (\ref{b}). Then $t_S(\a,\b') \leq t_P(\a,\b')$ for all $\b' \geq \b$.
\end{lem}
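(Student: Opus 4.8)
The plan is to compare the two continuous throttling functions by analyzing their derivatives with respect to $\b$ on the interval where the ``optimal'' $\s$ is locally constant, and then show the inequality is preserved across the breakpoints $\b_s$ where $\s$ jumps. Fix $\a\ge 3$. On an interval of $\b$ where $\s$ does not change, $t_S(\a,\b) = h(\a,\b,\s) = 1 + \a\s + \frac{\b+\frac12}{2\s+1} - \frac12$ is linear in $\b$ with slope $\frac{1}{2\s+1}$. Meanwhile $t_P(\a,\b) = \sqrt{2(\a\b+1)+\frac14} - \frac12$ has derivative $\frac{\a}{\sqrt{2(\a\b+1)+\frac14}}$, which is positive and strictly decreasing in $\b$. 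So on each such interval, $t_P - t_S$ is convex (its derivative is decreasing then... wait—$t_P$ is concave, $t_S$ linear, so $t_P - t_S$ is concave): the difference $t_P(\a,\b) - t_S(\a,\b)$ is a concave function of $\b$ on each interval where $\s$ is constant.

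First I would make the interval structure precise: for $s\ge 1$, the optimal value $\s$ equals $s$ exactly for $\b \in [\b_s, \b_{s+1}]$ where $\b_s = 2\a s^2 - \frac{\a+1}{2}$ from equation (\ref{b}) (with the convention that the relevant range starts at $\b_1$). On the interior of each such interval $t_S(\a,\b)=h(\a,\b,s)$; at the endpoints $\b_{s+1}$ the two expressions $h(\a,\b_{s+1},s)$ and $h(\a,\b_{s+1},s+1)$ agree, by the defining property of $\b_{s+1}$, so $t_S$ is continuous across breakpoints (this is exactly Corollary \ref{ts}). Therefore it suffices to show: (i) if $t_S(\a,\b_0) \le t_P(\a,\b_0)$ at the left endpoint $\b_0$ of an interval $[\b_s,\b_{s+1}]$, and (ii) $t_S(\a,\b_{s+1}) \le t_P(\a,\b_{s+1})$ at the right endpoint, then $t_S \le t_P$ throughout $[\b_s,\b_{s+1}]$. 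But (i)+(ii) give the inequality at both endpoints of an interval on which $t_P - t_S$ is concave, hence nonnegative on the whole interval. So the only thing that needs a genuine estimate is the inductive step: assuming $t_S(\a,\b_s)\le t_P(\a,\b_s)$, prove $t_S(\a,\b_{s+1})\le t_P(\a,\b_{s+1})$. This is a single-variable inequality once we substitute $\b = \b_{s+1} = 2\a(s+1)^2 - \frac{\a+1}{2}$ (or $\b = \b_s$); at this point $t_S$ takes a clean closed form because $\b_{s+1}$ was chosen precisely so that $\s$ is on the boundary. This is where Lemma \ref{triLem}'s identity $\thp(P_n)=\lc\sqrt{2n+\frac14}-\frac12\rc$ is meant to produce ``elegant cancellation'': evaluating $t_P$ at $\b = \b_{s+1}$ should make the expression under the square root a perfect square (or nearly so), collapsing $t_P(\a,\b_{s+1})$ to a linear expression in $s$ comparable termwise with $t_S(\a,\b_{s+1}) = 1 + \a s + \frac{\b_{s+1}+\frac12}{2s+1} - \frac12$.

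Concretely, the steps in order are: (1) state the interval decomposition $\b\in[\b_s,\b_{s+1}]\Leftrightarrow \s = s$ and note $t_S(\a,\cdot)$ is piecewise linear and continuous; (2) observe $t_P(\a,\cdot)$ is concave and $t_S(\a,\cdot)$ is linear on each piece, so $t_P - t_S$ is concave on each piece, hence its minimum on $[\b_s,\b_{s+1}]$ is attained at an endpoint; (3) reduce the whole claim to: whenever $t_S(\a,\b_s)\le t_P(\a,\b_s)$ then also $t_S(\a,\b_{s+1})\le t_P(\a,\b_{s+1})$ — because then concavity on $[\b_s,\b_{s+1}]$ propagates the hypothesis $t_S(\a,\b)\le t_P(\a,\b)$ from $\b$ forward through every later interval; (4) carry out the endpoint computation, substituting the explicit $\b_s$, using Lemma \ref{triLem}, and verifying the resulting polynomial inequality in $s$ (and $\a$) holds for $\a\ge 3$, $s\ge 1$. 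I expect step (4) — the breakpoint inequality — to be the main obstacle: it requires showing the quantity under $t_P$'s radical is at least the square of the candidate linear lower bound, which after clearing denominators becomes a polynomial inequality in $\a$ and $s$ that must be checked to hold on the whole range $\a\ge 3$, $s\ge 1$; a secondary subtlety is handling the boundary case where $\b$ itself is not a breakpoint (so the first interval is $[\b,\b_{s+1}]$ rather than $[\b_s,\b_{s+1}]$), which is covered by the same concavity argument applied to the truncated interval since the hypothesis is given at its left endpoint.
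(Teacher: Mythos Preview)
Your plan is essentially the paper's: reduce via concavity of $t_P - t_S$ on each interval $[\b_s,\b_{s+1}]$ to checking the breakpoints, then propagate inductively. The paper executes step~(4) a little differently from what you sketch: rather than evaluating $t_P$ and $t_S$ separately at $\b_{s+1}$, it shows that the \emph{average} rate of change of $t_P$ over $[\b_s,\b_{s+1}]$ is at least the slope $\tfrac{1}{2s+1}$ of $t_S$ on that interval, which gives the inductive step $t_P(\b_{s+1}) - t_S(\b_{s+1}) \ge t_P(\b_s) - t_S(\b_s)$ directly. After rationalizing the difference of square roots this collapses to the single condition $9 \le 4\a^2 + 4\a$, valid for $\a\ge 3$.

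Your hope that the radicand becomes a perfect square at $\b_{s}$ is misplaced --- it equals $4\a^2 s^2 - \a^2 - \a + \tfrac94$, which is not a square --- but your fallback (verify a polynomial inequality after squaring) does succeed: one gets, in effect, $t_S(\a,\b_s)=2\a s-\tfrac{\a-1}{2}$ and the squared comparison reduces to $8\a s(\a-2)\ge 5(\a^2-1)$, which holds for $\a\ge 3$ and $s\ge 2$. Note that this route actually proves $t_S(\a,\b_s)\le t_P(\a,\b_s)$ \emph{unconditionally} for $s\ge 2$ (stronger than the implication you stated in step~(3)); it fails at $s=1$ when $\a\in\{3,4\}$, whereas the paper's rate-of-change comparison goes through uniformly for all $s\ge 1$. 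Also, Lemma~\ref{triLem} is not invoked again in the computation; it only motivated the form of $t_P$.
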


\begin{proof}
Observe that $t_S$ is locally linear in $\b$ (except at each $\b_s$), whereas $t_P$ is concave down in $\b$. Thus, we need only show that the inequality holds for the values $\b_s$, where each change in slope occurs. We prove this by examining the average rates of change of the respective functions over the intervals $[\b_s,\b_{s+1}]$.
As $t_S(\a,\b)$ is linear on each $[\b_s,\b_{s+1}]$, we know it has slope \(\frac{1}{2s+1}\). Computing the average slope of $t_P$ on each interval is a bit trickier.

\begin{align*}
\frac{t_P(\a,\b_{s+1}) - t_P(\a,\b_s)}{\b_{s+1}-\b_s} &= \frac{\left(\sqrt{2(\a\b_{s+1}+1) + \frac{1}{4}} -\frac{1}{2}\right) - \left(\sqrt{2(\a\b_s+1) + \frac{1}{4}} -\frac{1}{2}\right)}{\b_{s+1} - \b_{s}} \\
  &= \frac{\sqrt{2\a\b_{s+1} + \frac{9}{4}}  - \sqrt{2\a\b_s + \frac{9}{4}}}{\b_{s+1} - \b_{s}} \\
  &= \frac{\sqrt{4\a^2s^2 + 8\a^2s + 3\a^2 + -\a + \frac{9}{4}} - \sqrt{4\a^2s^2-\a^2 -\a +\frac{9}{4}}}{4\a s+2\a}\\
 &= \frac{\sqrt{16\a^2s^2 + 32\a^2s + 12\a^2 -4\a + 9} - \sqrt{16\a^2s^2-4\a^2 -4\a +9}}{4\a (2s+1)}
\end{align*}

With a little algebraic manipulation, we see that the average rate of change for $t_S(\a,\b)$ is less than the average rate of change of $t_P(\a,\b)$ when the inequality
\[4\a \leq  \sqrt{16\a^2s^2 + 32\a^2s + 12\a^2 -4\a + 9} - \sqrt{16\a^2s^2-4\a^2 -4\a +9} \] holds. To establish this condition, $r_{s+1}$ and $r_s$ will be used as shorthand for the two square roots, respectively. Thus we need to show that

\begin{align*}
(4\a+r_s)^2 &\leq r_{s+1}^2 \\
16\a^2 +8\a r_s &\leq r_{s+1}^2 -r_s^2 \\
16\a^2 + 8\a r_s &\leq 32\a^2s + 16\a^2 \\
8\a r_s &\leq 32\a^2 s \\
r_s^2 &\leq 16\a^2 s^2 \\
16\a^2 -4\a^2 -4\a +9 &\leq 16\a^2s^2 \\
9 &\leq 4\a^2 + 4\a
\end{align*}

Since $\a\geq 3$ by hypothesis, the last inequality holds for all balanced spiders. 
\end{proof}

One should note that this inequality is true for all $\b$. However, $\b_0$ is actually negative, and thus has no context within the problem. Hence the initial restriction $\b\geq \b_1$.

\begin{lem} \label{3spid}
For all $\a\geq3$, $t_S(\a,\b_2)\leq t_P(\a,\b_2)$.
\end{lem}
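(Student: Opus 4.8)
The plan is to evaluate the two continuous throttling functions $t_S$ and $t_P$ at the single abscissa $\b_2$ and to reduce the claimed inequality to an elementary quadratic inequality in $\a$.

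First I would make $\b_2$ explicit. Setting $s=2$ in (\ref{b}) gives $\b_2 = 2\a\cdot 4 - \tfrac{\a+1}{2} = \tfrac{15\a-1}{2}$. Next I would identify $\s$ at $\b=\b_2$: substituting into $\s = \lf\sqrt{\tfrac{2\b+\a+1}{4\a}}\rf$ yields $\sqrt{\tfrac{16\a}{4\a}} = 2$, so $\s = 2$ there (and since $\b_2$ is by construction the crossover value where the choices $s=1$ and $s=2$ give equal throttling, this selection is immaterial). Then $\hat t = \tfrac{\b_2+\frac12}{2\s+1}-\tfrac12 = \tfrac{\b_2+\frac12}{5}-\tfrac12$, and since $\b_2 + \tfrac12 = \tfrac{15\a}{2}$ this gives $t_S(\a,\b_2) = 1 + 2\a + \tfrac{3\a}{2} - \tfrac12 = \tfrac{7\a+1}{2}$.

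Then I would compute $t_P(\a,\b_2)$. From $\a\b_2 = \tfrac{15\a^2-\a}{2}$ we get $2(\a\b_2+1)+\tfrac14 = 15\a^2 - \a + \tfrac94$, so $t_P(\a,\b_2) = \sqrt{15\a^2 - \a + \tfrac94} - \tfrac12$. The desired inequality $t_S(\a,\b_2)\le t_P(\a,\b_2)$ is then equivalent, after adding $\tfrac12$ to both sides and squaring (legitimate since $\tfrac{7\a+2}{2}>0$), to $(7\a+2)^2 \le 60\a^2 - 4\a + 9$, i.e.\ to $11\a^2 - 32\a + 5 \ge 0$. This quadratic has its two roots near $0.17$ and $2.74$, hence is nonnegative for every integer $\a\ge 3$ (indeed it equals $8$ at $\a=3$), which completes the proof.

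There is essentially no serious obstacle here: the only care required is the bookkeeping of the $-\tfrac12$ terms in $t_S$ and $t_P$ and confirming that $\s=2$ at $\b_2$ so that $t_S$ is evaluated with denominator $2\s+1 = 5$. It is worth noting in passing that $\b_2 = \tfrac{15\a-1}{2} > \tfrac{3\a-1}{2} = \b_1$, so that combining this lemma with Lemma \ref{slopes} yields $t_S(\a,\b)\le t_P(\a,\b)$ for all $\b\ge\b_2$.
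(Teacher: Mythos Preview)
Your proof is correct and follows essentially the same route as the paper's: both compute $\b_2=\tfrac{15\a-1}{2}$, evaluate $t_S(\a,\b_2)$ with $\s=2$ and $t_P(\a,\b_2)$, and reduce the claim to $\sqrt{15\a^2-\a+\tfrac94}\ge \tfrac{7\a+2}{2}$. Your version is in fact slightly more careful at the end---you square and analyze the resulting quadratic $11\a^2-32\a+5\ge 0$, whereas the paper simply asserts nonnegativity (and even overstates the range as ``all $\a\ge 1$,'' which is false; your root computation correctly shows $\a\ge 3$ is what is needed).
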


\begin{proof}
Note that $\b_2= 8\a-\frac{\a+1}{2}$. Then we need only show that the difference
\begin{align*}
t_P(\a,\b_2)-t_S(\a,\b_2) &= \sqrt{2\a\b_2+\frac{9}{4}} - \left(1 + 2\a + \frac{\b_2 + \frac{1}{2}}{5} \right) \\
                          &= \sqrt{2\a\left(8\a-\frac{\a+1}{2}\right)+\frac{9}{4}} - \left(1 + \frac{10\a + (8\a-\frac{\a+1}{2}) + \frac{1}{2}}{5} \right) \\
                          &= \sqrt{15\a^2-\a +\frac{9}{4}} - \frac{7}{2}\a -1  
\end{align*}
is non-negative, which it is for all $\a\geq 1$.
\end{proof}

\begin{lem}\label{5spid}
For all $\a\geq 5$, $t_S(\a,\b_1)\leq t_P(\a,\b_1)$.
\end{lem}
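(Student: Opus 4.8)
The plan is to evaluate $t_S$ and $t_P$ in closed form at $\b=\b_1$ and reduce the claimed inequality to an elementary quadratic inequality in $\a$. First, by (\ref{b}) we have $\b_1=2\a-\frac{\a+1}{2}=\frac{3\a-1}{2}$. Substituting this into the formula for $\s$ from Theorem \ref{thm:bal-spider-bd} gives
\[\sqrt{\frac{2\b_1+\a+1}{4\a}}=\sqrt{\frac{4\a}{4\a}}=1,\]
so $\s=1$ at $\b_1$; hence $\hat t=\frac{\b_1+\frac12}{3}-\frac12=\frac{\a-1}{2}$ and
\[t_S(\a,\b_1)=1+\a+\frac{\a-1}{2}=\frac{3\a+1}{2}.\]

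Next, $\a\b_1=\frac{3\a^2-\a}{2}$, so $2(\a\b_1+1)+\frac14=3\a^2-\a+\frac94$ and $t_P(\a,\b_1)=\sqrt{3\a^2-\a+\frac94}-\frac12$. Thus $t_S(\a,\b_1)\le t_P(\a,\b_1)$ is equivalent to $\frac{3\a+2}{2}\le\sqrt{3\a^2-\a+\frac94}$, and since the left-hand side is positive we may square both sides to obtain the equivalent inequality $9\a^2+12\a+4\le 12\a^2-4\a+9$, i.e.
\[0\le 3\a^2-16\a+5=(3\a-1)(\a-5).\]
For $\a\ge1$ the factor $3\a-1$ is positive, so this product is nonnegative exactly when $\a\ge5$, which is the hypothesis.

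The argument involves no real obstacle beyond the bookkeeping; the two points to handle with care are that $\s=1$ (not $0$) at the left endpoint $\b_1$, so that one works on the $s=1$ branch of the family $h$ from Theorem \ref{thm:bal-spider-bd}, and that the larger root of the quadratic is exactly $5$. The latter explains why the lemma must exclude $\a\in\{3,4\}$: for those two values the inequality at $\b_1$ can fail, and those cases will instead be recovered from Lemma \ref{3spid} together with Lemma \ref{slopes}, which already give $t_S(\a,\b)\le t_P(\a,\b)$ for all $\b\ge\b_2$, leaving only a finite range of $\b$ to check directly.
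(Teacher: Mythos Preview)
Your proof is correct and follows essentially the same approach as the paper: both compute $\b_1=\tfrac{3\a-1}{2}$, evaluate $t_S(\a,\b_1)=\tfrac{3\a}{2}+1$ (using $\hat s=1$) and $t_P(\a,\b_1)=\sqrt{3\a^2-\a+\tfrac94}-\tfrac12$, and reduce the claim to the nonnegativity of $\sqrt{3\a^2-\a+\tfrac94}-\tfrac{3\a}{2}-1$. You actually go one step further than the paper by squaring and factoring to $(3\a-1)(\a-5)\ge 0$, which makes the threshold $\a=5$ transparent, whereas the paper simply asserts the final inequality holds for $\a\ge5$.
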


\begin{proof}
Note that $\b_1 = \frac{3\a-1}{2}$. Then, we need only show that the difference

\begin{align*}
t_P(\a,\b_1)-t_S(\a,\b_1)   &= \sqrt{2\a\b_1+\frac{9}{4}} - \left(1 + \a + \frac{\b_1+\frac{1}{2}}{3}\right) \\
                            &= \sqrt{2\a \left( \frac{3\a-1}{2} \right)+\frac{9}{4}} - \left(1 + \a + \frac{\left( \frac{3\a-1}{2} \right)+\frac{1}{2}}{3}\right) \\
                            &= \sqrt{3\a^2 -\a +\frac{9}{4}} - \frac{3}{2}\a -1
\end{align*}
is non-negative. which it is for all $\a\geq 5$.
\end{proof}

\begin{thm}
There are no balanced super-spiders.
\end{thm}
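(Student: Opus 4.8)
The plan is to prove $\thp(T_{\a,\b})\le\thp(P_{\a\b+1})$ for every balanced spider, i.e.\ for all $\a\ge 3$ and all integers $\b\ge 1$. By Corollary \ref{ts} and Lemma \ref{triLem} we have $\thp(T_{\a,\b})=\lceil t_S(\a,\b)\rceil$ and $\thp(P_{\a\b+1})=\lceil t_P(\a,\b)\rceil$, both integers, so it is enough to prove $t_S(\a,\b)\le\thp(P_{\a\b+1})$; in particular $t_S(\a,\b)\le t_P(\a,\b)$ suffices, as already noted before Lemma \ref{slopes}. I would then split on the value of $\s$, using that the formula for $\s$ in Theorem \ref{thm:bal-spider-bd} gives $\s=0$ exactly when $\b<\b_1=\tfrac{3\a-1}{2}$ and $\s\ge 1$ otherwise.

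First I would dispose of the case $\s=0$. Here $t_S(\a,\b)=1+\b$ straight from the definition, and after squaring the inequality $1+\b\le t_P(\a,\b)=\sqrt{2\a\b+\tfrac94}-\tfrac12$ is equivalent to $\b\le 2\a-3$. This holds because $\b$ is an integer with $\b<\b_1=\tfrac{3\a-1}{2}\le 2\a-2$ for $\a\ge3$, so this case is immediate for every $\a\ge 3$.

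Next I would handle $\s\ge 1$ (equivalently $\b\ge\b_1$). If $\a\ge 5$, Lemma \ref{5spid} gives $t_S(\a,\b_1)\le t_P(\a,\b_1)$ and Lemma \ref{slopes} propagates this to all $\b\ge\b_1$, which finishes those $\a$. If $\a\in\{3,4\}$, Lemma \ref{3spid} gives $t_S(\a,\b_2)\le t_P(\a,\b_2)$ and Lemma \ref{slopes} propagates it to all $\b\ge\b_2$, leaving only the finitely many integers with $\b_1\le\b<\b_2$ — namely $\a=3$, $\b\in\{4,\dots,21\}$ and $\a=4$, $\b\in\{6,\dots,29\}$. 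On this range $\s=1$, so $t_S(\a,\b)=1+\a+\tfrac{\b-1}{3}$ and $\thp(T_{\a,\b})=\a+1+k$ with $k=\lceil\tfrac{\b-1}{3}\rceil$; by Lemma \ref{triLem}, $\thp(P_{\a\b+1})\ge\a+1+k$ iff $\a\b+1>\tfrac12(\a+k)(\a+k+1)$, and since $\b\ge 3k-1$ (together with $\b\ge\b_1$) this reduces to a quadratic inequality in $k$ that holds throughout the relevant range, the handful of borderline pairs $(\a,\b)\in\{(3,4),(4,6),(4,7)\}$ being checked by direct substitution (they comfortably satisfy the bound). Combining the two cases yields $\thp(T_{\a,\b})\le\thp(P_{\a\b+1})$ in all cases, so no balanced spider is a super-spider.

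The main obstacle is that the clean continuous comparison $t_S\le t_P$ genuinely fails for $\a\in\{3,4\}$ in a short window just above $\b_1$ — for instance $t_S(3,4)=5>4.62\approx t_P(3,4)$ — which is exactly why Lemmas \ref{3spid} and \ref{5spid} are split at $\a\ge3$ versus $\a\ge5$. So the one genuinely non-mechanical point is recognizing that for these small $\a$ one must drop to the discrete inequality $\lceil t_S\rceil\le\lceil t_P\rceil$ and verify it on the bounded interval $[\b_1,\b_2)$; once that interval has been pinned down via $\b_1$ and $\b_2$, the verification is routine, whether carried out by hand, through the triangle-number form of Lemma \ref{triLem}, or by the computer search already described in the appendix.
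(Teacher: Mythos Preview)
Your argument is correct and follows the same overall strategy as the paper: compare the continuous functions $t_S$ and $t_P$ via Lemmas~\ref{slopes}, \ref{3spid}, and \ref{5spid}, then dispatch the residual finite range by direct verification. The organization differs in two places. First, for $\b<\b_1$ (your $\s=0$ case) you give a clean direct inequality $1+\b\le t_P(\a,\b)\Leftrightarrow\b\le 2\a-3$, whereas the paper handles this range for $\a\ge5$ by appealing to the shared endpoint $t_S(\a,0)=t_P(\a,0)=1$, the linearity of $t_S$ on $[0,\b_1]$, and the concavity of $t_P$, and for $\a\in\{3,4\}$ folds it into the crossing-point analysis. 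Second, for $\a\in\{3,4\}$ with $\b_1\le\b<\b_2$, the paper locates the actual crossover $b_2\in[\b_1,\b_2]$ numerically (e.g.\ $t_S(3,6)<t_P(3,6)$), which immediately cuts the check down to the three spiders $T_{3,4},T_{3,5},T_{4,6}$; your triangle-number/quadratic reduction is more systematic but leaves the full window $[\b_1,\b_2)$ nominally in play before collapsing to the three borderline pairs $(3,4),(4,6),(4,7)$. Either execution is fine; the paper's is a bit shorter on the $\a\in\{3,4\}$ end, while your $\s=0$ step is more self-contained.
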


\begin{proof}

Assume first that $\a\geq 5$. By Lemma \ref{5spid}, $t_S(\a,\b)\leq t_P(\a,\b)$ for all $\b\geq\b_1$. Further, since $t_P(\a,0)=t_S(\a,0)=1$ for all $\a\geq 3$, $t_P$ is concave down in $\b$, and $t_S$ is linear in $b$ over the interval $[0,\b_1]$, we have that  $t_S(\a,\b)\leq t_P(\a,\b)$ for all $\b\geq 0$. Thus, there are no balanced super-spiders on five or more legs, and we need only demonstrate that there are no balanced super-spiders on three or four legs. 

Now, suppose $\a\in\{3,4\}$. Then $t_S(\a,\b_1)\geq t_P(\a,\b_1)$, and $t_S(\a,\b_2)\leq t_P(\a,\b_2)$. Thus, for $\a\in\{3,4\}$, there is a point $b_1$ in the interval $[0,\b_1]$ where $t_S$ becomes larger than $t_P$, and there is another point $b_2$ in the interval $[\b_1,\b_2]$ where $t_P$ becomes larger than $t_S$ (See Figure 1). Since Lemma \ref{3spid} proves there are no balanced super spiders with $\b\geq \b_2$, any balanced super-spider must have $\a\in\{3,4\}$, and $\b\in (b_1,b_2)$. 

Suppose $\a=3$. Then $\b_1=4$ and  $t_S(3,3)=t_P(3,3)=4$, so $b_1=3$. On the other end, $\b_2=22$ and  $\frac{17}{3}=t_S(3,6)<t_P(3,6)=5.685$,  so $b_2<6$. Thus, the only integer candidates for $\b$ are 4 and 5.
Suppose $\a=4$. Then $\b_1=5.5$ and $t_S(4,5)=t_P(4,5)=6$, so $b_1=5$. On the other end, $b_2=29.5$ and  $7=t_S(4,7)<t_P(4,7) \approx 7.132$, so $b_2<7$. Thus, the only integer candidate for $\b$ is 6.

To summarize, the balanced spiders $T_{3,4}$, $T_{3,5}$, and $T_{4,6}$ are the \emph{only} candidate balanced super-spiders. However, it is easy to verify that each these has a full throttling number \emph{equal} to that of its correlated path, and thus is \emph{not} a super-spider.
\end{proof}

\begin{figure}
    \centering
    \begin{subfigure}[b]{0.45\textwidth}
        \includegraphics[width=\textwidth]{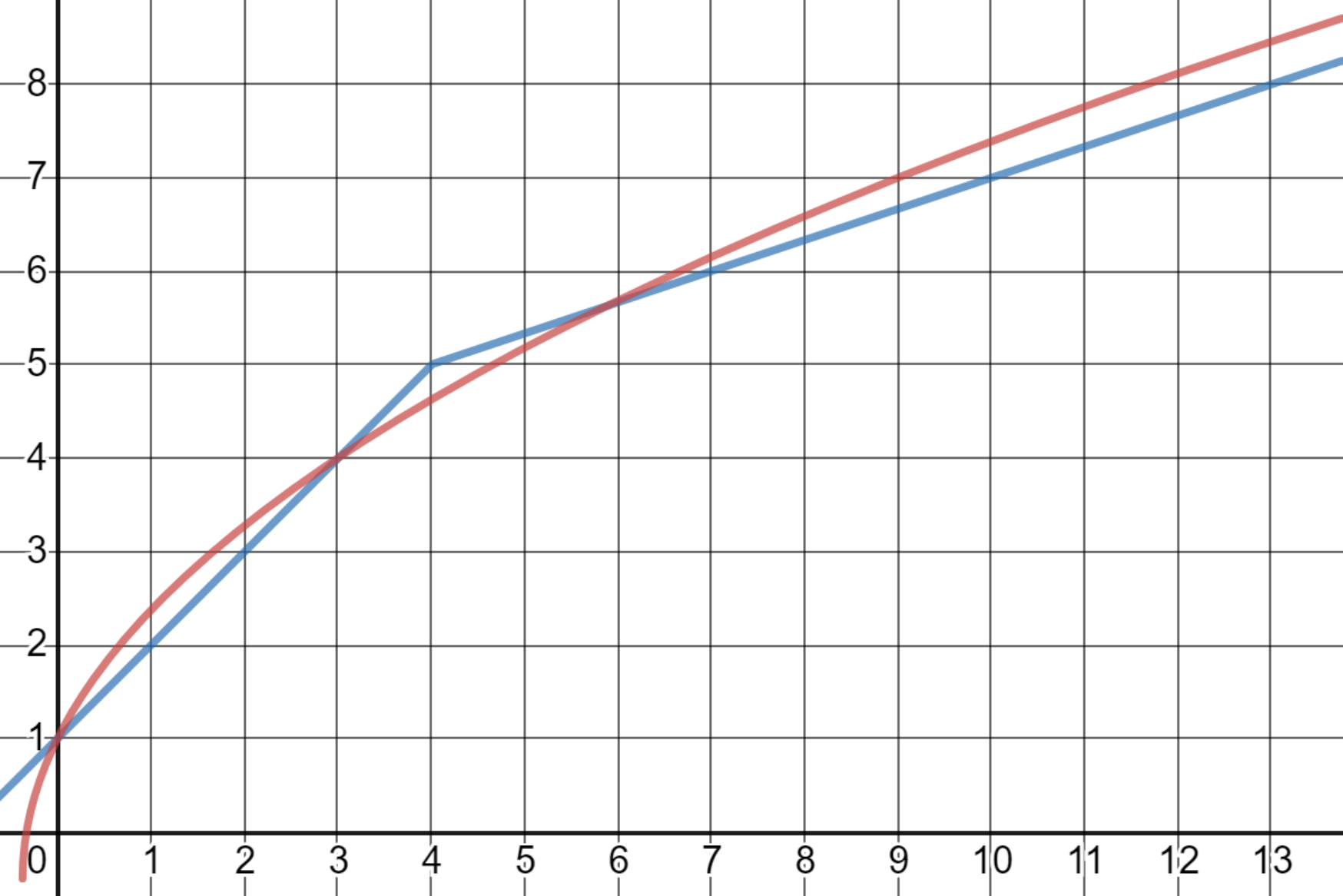}
        \caption{$\a=3$}
        \label{fig:sspid3}
    \end{subfigure}
    ~ 
    \begin{subfigure}[b]{0.45\textwidth}
        \includegraphics[width=\textwidth]{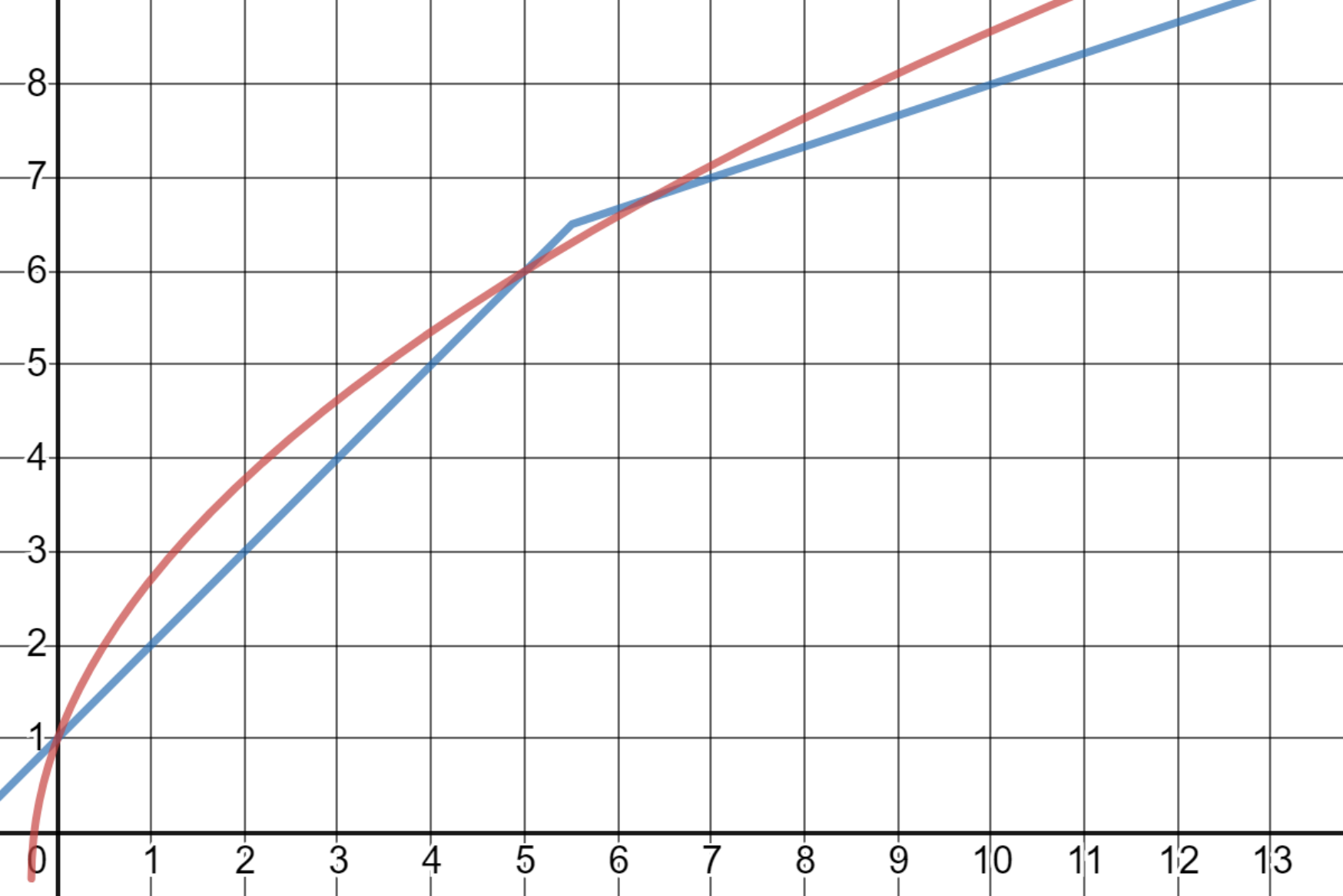}
        \caption{$\a=4$}
        \label{fig:sspid4}
    \end{subfigure}
    \caption{$t_S$ and $t_P$ as functions of $\b$}\label{fig:graphs}
\end{figure}

\appendix
\section{Algorithms, Computations, and Data}

The overall process is as follows. Given $n$, we first compute the full throttling number $t$ of $P_n$. To iterate through all spiders, we observe that the spiders have a one-to-one correspondence with the partitions of the integer $n-1$ that have at least three parts. Next, we iterate through all possible values for $|B|=s\leq \thf(P_n)$. Once the spider and proposed starting size are chosen, the recursive Sage function below determines if the spider can be fully forced within $p=t-s$ time steps.

\vspace{1em}
\hrule
\begin{verbatim}
def spidthrot(partlist, cbool, s, p):
    #partlist = Partition representing the spider
    #cbool = Boolean, true if center is already colored
    #s = remaining number of choices for starting set
    #p = proposed propagation time
    plist=list(partlist)        #Convert Partition to list
    plist.sort()                #Sort legs
    if (not plist) and (cbool or s>0):  
                                #Legs empty and can finish
        return true
    elif(not plist):            #Legs empty but can't finish
        return false
    elif plist and s == 0:      #Legs not empty, can't choose more; s>=0 for rest
        return false 
    elif plist[-1] > 2*p +1:    #Longest needs more than 1, can choose more
        l = plist.pop()
        l = l - (2*p+1)
        plist.append(l)
        return spidthrot(plist, cbool, s-1, p)
                                #Recursive, cover longest, costs 1
    elif plist[-1] == 2*p +1:   #Longest needs full time, w/o center
        plist.pop()
        return spidthrot(plist, cbool, s-1, p)
    elif plist[-1] == 2*p:      #Covering longest includes center
        plist.pop()
        return spidthrot(plist, true, s-1, p)
    elif plist[-1] > p:         #Covering longest cleans up short legs
        l = plist.pop()
        while plist and plist[0]<= 2*p - l:
            plist.pop(0)
        return spidthrot(plist, true, s-1, p)
    else:                       #Have one to spare, and choosing center covers all.
        return true
\end{verbatim}
\hrule
\vspace{1em}

If the spider has full throttling number at most $t$, we move on to the next spider. If not, we run the recursion for $t\leq t' \leq t+k$ (usually $k=1$), to determine the spider's full throttling number, with a special message given if the choice of $k$ is too small.

Below is a table containing most of what is known about super-spiders. For all $1\leq n\leq 74$, $n$ is omitted from the table if there are no super-spiders of order $n$. The smallest super-spiders for each value of $t$ are given as tuples.

\begin{figure}[ht]
\begin{center}
\begin{tabular}{|c|c|c|c|}
\hline
$n$        &  $\thf(P_n)$   & \# of S-Spiders  & Examples \\
\hline
10         &  4             &  1            & $(4,3,2)$\\
\hline
15         &  5             &  4            & $(5,4,3,2),(5,4,4,1),(6,4,4),(7,4,3)$ \\
\hline
20         &  6             &  2            & $(6,5,4,4),(7,5,4,3)$ \\
\hline
21         &  6             &  17           & -Many-  \\
\hline
26         &  7             &  3            & $(7,6,5,4,3),(9,6,5,5),(10,6,5,4)$ \\ \hline
27         &  7             &  17           & -Many- \\
\hline
28         &  7             &  62           & -Many- \\
\hline
33         &  8             &  2            & $(9,7,6,5,5),(10,7,6,5,4)$ \\
\hline
34         &  8             &  19           &  -Many-  \\
\hline
35         &  8             &  77           &  -Many- \\
\hline
36         &  8             &  221          &  -Many \\
\hline
41         &  9             &  5            &  {\begin{tabular}{@{}c@{}}$(9,8,7,6,5,5),(10,8,7,6,5,4),(12,9,7,6,6)$ \\  $(13,8,7,6,6),(13,9,7,6,5)$\end{tabular}} \\
\hline
42         &  9             &  31           &  -Many-  \\
\hline
43         &  9             &  118          &  -Many-  \\
\hline
44         &  9             &  330          &  -Many-  \\
\hline
45         &  9             &  783          &  -Many-  \\
\hline
49         &  10            &  2            &  $(12,9,8,7,6,6),(13,9,8,7,6,5)$\\ 
\hline
50         &  10            &  14           &  -Many-  \\
\hline
51         &  10            &  61           &  -Many-  \\
\hline
52         &  10            &  210          &  -Many-  \\
\hline
53         &  10            &  595          &  -Many-  \\
\hline
54         &  10            &  1399         &  -Many-  \\
\hline
55         &  10            &  2920         &  -Many- \\
\hline
59         &  11            &  4            &  {\begin{tabular}{@{}c@{}}$(12, 10, 9, 8, 7, 6, 6),(13, 10, 9, 8, 7, 6, 5),$ \\ $(15, 12, 9, 8, 7, 7),(16, 12, 9, 8, 7, 6)$ \end{tabular}} \\
\hline
60         &  11            &  32           &  -Many- \\
\hline
61         &  11            &  131          &  -Many- \\
\hline
62         &  11            &  441          &  -Many-  \\
\hline
63         &  11            &  1201         &  -Many-  \\
\hline
64         &  11            &  2803         &  -Many-  \\
\hline
65         &  11            &  5792         &  -Many- \\
\hline
66         &  11            &  10986        &  -Many-  \\
\hline
69         &  12            &  3            & {\begin{tabular}{@{}c@{}}$(15, 12, 10, 9, 8, 7, 7),
(16, 11, 10, 9, 8, 7, 7),$ \\
$(16, 12, 10, 9, 8, 7, 6)$ \end{tabular}} \\
\hline
70         &  12            &  22           &  -Many- \\
\hline
71         &  12            &  104          &  -Many-  \\
\hline
72         &  12            &  380          &  -Many- \\
\hline
73         &  12            &  1123         &  -Many-  \\
\hline
74         &  12            &  2823         &  -Many- \\
\hline

\end{tabular}
\end{center}

\end{figure}

Note that as the full throttling number increases, super-spiders appear sooner (relative to $n_t$). This suggests a potential way to construct a tree with a full throttling number higher than path plus one. For example, $n_{12}=78$, but we have a super-spider (with $\thf(S)=13$) on 69 vertices, $S(15,12,10,9,8,7,7)$. Thus there are 9 vertices one might cleverly place to get a full throttling number of 14.

\clearpage

\end{document}